\documentclass[11pt,reqno]{amsart}

\usepackage[margin=1.1in]{geometry}
\usepackage{amsmath,amssymb,amsthm,mathtools}
\usepackage{enumitem}
\usepackage[dvipsnames]{xcolor}
\usepackage{tikz}
\usepackage{float}
\usepackage[colorlinks=true,linkcolor=blue!60!black,citecolor=red!60!black,urlcolor=blue!60!black,pagebackref=true]{hyperref}
\numberwithin{equation}{section}
\newtheorem{theorem}{Theorem}[section]
\newtheorem{lemma}[theorem]{Lemma}
\newtheorem{proposition}[theorem]{Proposition}

\theoremstyle{definition}
\newtheorem{definition}[theorem]{Definition}
\theoremstyle{remark}
\newtheorem{remark}[theorem]{Remark}

\newcommand{\R}{\mathbb R}
\newcommand{\dd}{\,\mathrm d}
\newcommand{\TM}{\mathrm{TM}}
\newcommand{\eps}{\varepsilon}
\newcommand{\weakto}{\rightharpoonup}
\newcommand{\dist}{\operatorname{dist}}

\begin{document}
\title[Extremals via polygonal profiles]{Extremals for Trudinger--Moser inequalities via polygonal profiles}
\author{Jos\'e Francisco de Oliveira}
\address{Department of Mathematics, Federal University of Piau\'i, 64049-550 Teresina, PI, Brazil}
\email{jfoliveira@ufpi.edu.br}
\subjclass[2020]{46E35, 26D10, 35J20, 35B33}
\keywords{Trudinger--Moser inequality; logarithmic weights; extremal functions; concentration; supercritical local maximizers}
\begin{abstract}
  The aim of this paper is to establish the existence of extremal functions for the Trudinger--Moser inequality with logarithmic weights $(-\log |x|)^{\beta}$ in two dimensions throughout the full range $0\le \beta<1$. We also extend recent results on the existence of local maximizers for the Trudinger--Moser functional beyond the critical threshold. Our approach uses polygonal profiles to establish a lower bound uniform in the weight parameter. An explicit thirty-node profile gives the lower bound $\mathrm{TM}_{4\pi}>4.38421793217\,\pi$ for the classical critical supremum on the unit disk, improving on the value $4.3556\,\pi$ obtained in the computer experiments of Carleson and Chang. This lower bound has a purely analytic proof, which uses only elementary inequalities and exact rational arithmetic. Numerical experiments with polygonal profiles of up to $102400$ nodes suggest that this lower bound can be raised to $4.402732044\,\pi$, which improves the recent computational lower bound by Li and Sun.
\end{abstract}
\maketitle

%======================================================================
\section{Introduction and main results}\label{sec:introduction}
%====================================================================
Let $B\subset\R^2$ be the open unit disk and denote by $H^1_0(B)$ the Sobolev space endowed with the Dirichlet norm. The classical Trudinger--Moser inequality \cite{Trudinger67,HempelMorrisTrudinger70,Moser71} identifies exponential growth as optimal for the embedding of $H^1_0(B)$ into the class of Orlicz spaces. In particular, the well-known Trudinger--Moser functional $J_{\alpha}:H^1_0(B)\to \mathbb{R}$ given by
\begin{equation}
    J_{\alpha}(u)=\int_{B}e^{\alpha u^{2}}\dd x, \;\; \alpha\ge 0
\end{equation}
is well-defined and satisfies
\begin{equation}\label{Moser-sup}
   \mathrm{TM}_{\alpha}=\sup_{\substack{u\in H^1_0(B),\;
   \|\nabla u\|_2\le 1}}
 J_{\alpha}(u) \quad\left\{\begin{aligned}
        &<\infty \;&\mbox{if}&\;\; \alpha\le 4\pi\\
        &=\infty \;&\mbox{if}&\;\; \alpha>4\pi,
    \end{aligned}\right.
\end{equation}
where $\|\cdot\|_2$ denotes the standard norm in the Lebesgue space $L^2(B)$. Especially at the critical threshold $\alpha=4\pi$, the existence of a maximizer for the supremum \eqref{Moser-sup} is delicate, due to the influence of optimal exponential growth that causes a lack of compactness; see, for instance, \cite{Lions85,CernyCianchiHencl13}. In spite of that, Carleson and Chang \cite{CarlesonChang86} were able to show that the supremum admits a maximizer in the critical case, by an approach currently known as the two-step strategy of Carleson--Chang. For a nonsymmetric domain $\Omega\subset\mathbb{R}^N$ with general $N\ge 2$, the supremum \eqref{Moser-sup} can also be reformulated \cite{Moser71} and its maximizing problem was investigated by Struwe \cite{Struwe88}, Flucher \cite{Flucher92} and Lin \cite{Lin96}. 
This kind of estimate has a close relation with geometric analysis \cite{ChangYang03,Onofri82}
and PDEs \cite{Adimurthi90,deFigueiredodoORuf02} and a lot of generalizations can be found:
for unbounded domains \cite{Ruf05,LiRuf08,doO97,Cao92},
for sharpened forms \cite{AdimurthiDruet04,Tintarev14},
for Riemannian manifolds \cite{Fontana93,Li05}, and for weighted measures
\cite{AdimurthiSandeep07,AdimurthiYang10,OliveiraDoO14}.
Here we are interested in a Trudinger--Moser inequality in which the Dirichlet norm constraint
is subjected to a logarithmic weight, obtained by Calanchi and Ruf \cite{CalanchiRuf15}. Precisely, let $H^1_{0,\mathrm{rad}}(B,w_\beta)$  be  the completion of $C^\infty_{0,\mathrm{rad}}(B)$ with respect to the weighted norm
\begin{equation}\label{eq:norm}
\|u\|_{w_\beta}^2=\int_B|\nabla u|^2w_\beta\dd x,\;\; \mbox{where}\;\; w_\beta(x)=\left(-\log|x|\right)^\beta,\;\; 0\le \beta<1.
\end{equation}
Then, the functional $J_{\alpha,\beta}:H^1_{0,\mathrm{rad}}(B,w_\beta)\to\mathbb{R}$ given by 
\begin{equation}
  J_{\alpha,\beta}(u)=\int_{B}e^{\alpha |u|^{\frac{2}{1-\beta}}}\dd x
\end{equation}
is well-defined and 
\begin{equation}\label{CR-Moser-sup}
   \mathrm{TM}_{\alpha,\beta}=\sup_{\substack{u\in H^1_{0,\mathrm{rad}}(B,w_\beta),\;
\|u\|_{w_\beta}\le1}}
 J_{\alpha,\beta}(u)<\infty \;\;\mbox{iff}\;\; \alpha\le \alpha_\beta:=2[2\pi(1-\beta)]^{\frac{1}{1-\beta}}.
\end{equation}
The Calanchi--Ruf inequality \eqref{CR-Moser-sup} has recently attracted the attention of several authors.
Extensions to $N$-dimensional Euclidean spaces can be found in \cite{CalanchiRufN15},
to unbounded domains in \cite{AouaouiJlel20,Aouaoui25},
to Sobolev spaces with fractional dimensions in \cite{XueZhangZhu25},
and to the $k$-Hessian operator in \cite{DoOOliveiraPonciano26}.
Despite the many studies of this inequality, the central question of the existence of extremals
in the critical case $\alpha=\alpha_\beta$ throughout the full range $0\le\beta<1$
remains open, to the best of our knowledge.
Indeed, Roy \cite{Roy16} claimed to have answered this question positively;
however, Nguyen \cite{Nguyen19} identified a gap in Roy's argument, as also
acknowledged by Roy himself in \cite{RoyN19}. Nevertheless, Roy's method remains
valid for sufficiently small $\beta>0$ and therefore yields the existence of
extremals for \eqref{CR-Moser-sup} in this range. More precisely, the gap lies
only in the test function used in the proof of \cite[Theorem~3]{Roy16}, for which
the inequality $\|u\|_{w_\beta}\le 1$ is not guaranteed to hold for all
$\beta\in[0,1)$; after a suitable normalization, this test function yields the
desired conclusion for $\beta>0$ small enough (see \cite{RoyN19} for details). Our first result answers this question. Its proof uses normalized test functions
built from polygonal profiles (see Lemma~\ref{lem:test2} and Proposition~\ref{prop:explicit}).
\begin{theorem}\label{thm:attainment}
For every $0\le\beta<1$, we have that $\mathrm{TM}_{\alpha_{\beta},\beta}$ is attained. 
\end{theorem}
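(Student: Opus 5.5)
The plan is the Carleson--Chang two-step strategy after reducing the problem to a one-dimensional one. First, the change of variables $|x|=e^{-t/2}$ and $w(t)=c_\beta\,u(x)$, with $c_\beta$ chosen so that the exponent becomes $w^{2/(1-\beta)}$, turns the problem into the following form. We consider $w$ on $[0,\infty)$ with $w(0)=0$ and constraint $\int_0^\infty |w'|^2 t^\beta\,dt\le 1$; the exponent constant is normalised so that $\alpha_\beta u^{2/(1-\beta)}$ corresponds to $w^{2/(1-\beta)}$. The quantity to maximize is then $\int_0^\infty e^{w^{2/(1-\beta)}-t}\,dt$, equal to $\pi^{-1}J_{\alpha_\beta,\beta}$. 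The substitution $s=t^{1-\beta}/(1-\beta)$ should turn the weighted Dirichlet integral into an unweighted one in $s$. This change is what makes the critical exponent $2/(1-\beta)$ natural.

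Step one is the concentration alternative. I take a maximizing sequence $u_k$, which is radial and may be assumed nonincreasing. Passing to a subsequence, $u_k\weakto u$. If $u\ne0$, a Lions-type improvement (the weighted analogue of the higher-integrability lemma) gives uniform integrability of $e^{\alpha_\beta|u_k|^{2/(1-\beta)}}$. Then $J_{\alpha_\beta,\beta}(u_k)\to J_{\alpha_\beta,\beta}(u)$, so $u$ is an extremal. If $u_k$ is not compact, it must concentrate at the origin: $u_k\weakto0$ with $|\nabla u_k|^2w_\beta\dd x\to\delta_0$. Step two computes the concentration level. Following Carleson--Chang in the variable $t$, I expect the bound
\[
\limsup_k J_{\alpha_\beta,\beta}(u_k)\le \pi\,(1+e^{\gamma_\beta})
\]
for an explicit constant $\gamma_\beta$. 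For $\beta=0$ this constant is $\gamma_0=1$, which recovers Carleson--Chang's $\pi(1+e)$. For general $\beta$ I expect $\gamma_\beta$ to come out of the one-dimensional lemma: maximize $\int_K^\infty e^{w^{2/(1-\beta)}-t}$ given $w(K)$ and the Dirichlet budget on $[K,\infty)$. The proof proceeds via Hölder on the weighted integral and the elementary inequality $(a+b)^{p}\le a^p+p\,a^{p-1}b+\dots$ with $p=2/(1-\beta)\ge2$, which replaces the Cauchy--Schwarz step. I would take this upper bound $\pi(1+e^{\gamma_\beta})$ as established earlier in the paper, or derive it through this route.

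Step three, the main obstacle, is to exhibit for every $\beta\in[0,1)$ an admissible function with $J_{\alpha_\beta,\beta}>\pi(1+e^{\gamma_\beta})$. This is exactly where Roy's test function fails uniformly in $\beta$. Following the paper's indication, I would use Lemma~\ref{lem:test2} and Proposition~\ref{prop:explicit}. The test function is taken piecewise linear in $s$ (a polygonal profile) with finitely many nodes and then normalized to have weighted norm $1$. The lower bound on $J$ is computed explicitly node by node, since each linear piece gives an integral that can be bounded below by elementary means. I expect the hard part to be uniformity in $\beta$ as $\beta\to1^-$, where $p\to\infty$ and the exponential becomes very sensitive. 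To handle this I would build the profile relative to the Carleson--Chang extremal shape $w\approx\sqrt t$ (in the $s$ variable) near the concentration regime, with a polygonal tail. I would then check that the gap $J-\pi(1+e^{\gamma_\beta})$ stays positive by a monotonicity or convexity argument in $\beta$, together with the explicit rational estimates at the endpoints (such as the thirty-node profile at $\beta=0$). With the strict inequality in hand, concentration is excluded for maximizing sequences, and Step one yields a maximizer.
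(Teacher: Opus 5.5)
There is a genuine gap, and it is exactly at the step you identify as the main obstacle. Your Step one matches the paper, which uses Roy's Lemma~5 together with the Lions-type Lemma~2.2 of Nguyen. The trouble is in Step three.

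\textbf{The uniformity argument is missing.} Step three has no mechanism that produces a lower bound valid for every $\beta\in[0,1)$. You give no reason why $\beta\mapsto J_{\alpha_\beta,\beta}(u_{\beta,\phi})$, or the gap $J-\pi(1+e^{\gamma_\beta})$, should be monotone or convex in $\beta$, and the paper never uses such a property. Estimating at $\beta=0$ also points the wrong way. The case $\beta=0$ is the easy one, already settled by Carleson--Chang, and a bound there gives no information for $\beta>0$. You also never fix the target: $\gamma_\beta$ is left unspecified, so you do not know what has to be beaten. If it grew with $\beta$, no single profile could work.

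\textbf{The two missing ingredients.} The paper's argument rests on two known facts that your proposal lacks.
\begin{enumerate}
\item The concentration level is bounded uniformly: $\Sigma_\beta\le\pi(1+e)$ for all $\beta\in[0,1)$ (Roy, Lemma~9). So the target is the single number $\pi(1+e)$.
\item Nguyen's monotonicity: $\mathrm{TM}_{\alpha_{\beta'},\beta'}\le\mathrm{TM}_{\alpha_\beta,\beta}$ for $\beta\le\beta'$. Because the supremum is nonincreasing in $\beta$, it suffices to bound the limit of test values as $\beta'\to1^-$, and that bound then propagates down to every $\beta$.
\end{enumerate}

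\textbf{How the limit $\beta'\to1^-$ is computed.} The paper takes the test function $v_{\beta,\phi}=\alpha_\beta^{-(1-\beta)/2}\,\phi\big(2\log\frac1{|x|}\big)^{1-\beta}$, with $\phi$ polygonal in $t$ (not piecewise linear in $s$). After normalization this gives $J_{\alpha_\beta,\beta}(u_{\beta,\phi})=\pi\int_0^\infty e^{\phi(t)^2/\theta_\beta(\phi)-t}\dd t$. Here the exponent depends on $\beta$ only through the scalar $\theta_\beta(\phi)=\|v_{\beta,\phi}\|_{w_\beta}^{2/(1-\beta)}$. Splitting off the first linear piece gives $\|v_{\beta,\phi}\|_{w_\beta}^2=E_1^{1-\beta}+(1-\beta)R_\beta$. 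This yields the explicit limit $\theta_1(\phi)=\frac{y_1^2}{t_1}\exp\int_{t_1}^{t_n}\frac{t\phi'(t)^2}{\phi(t)^2}\dd t$. Dominated convergence then gives $\mathrm{TM}_{\alpha_\beta,\beta}\ge\pi\int_0^\infty e^{\phi^2/\theta_1(\phi)-t}\dd t$ for every $\beta$.

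A single three-node profile finishes the job. It has slope $\frac32$ up to $(8,12)$ and then nodes $(10,14)$ and $(12,15)$. Tangent-line bounds and Jensen's inequality give $\frac{2267}{600}\pi>\pi(1+e)$. The thirty-node profile plays no role in the attainment proof; it is used only for the classical bound at $\beta=0$.

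\textbf{A smaller normalization slip.} With exponent $w^{2/(1-\beta)}$, the correct constraint is $\int_0^\infty|w'|^2t^\beta\dd t\le1-\beta$, not $\le1$. As you wrote it, the reduced problem corresponds to some $\alpha>\alpha_\beta$ and is supercritical for $\beta>0$.
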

To clarify our main contribution, we outline below the approach used in the proof of Theorem~\ref{thm:attainment}. Inspired by the two-step strategy of Carleson--Chang, the idea is to exclude the concentration phenomenon for any normalized maximizing sequence. Following Nguyen \cite{Nguyen19} (see also \cite{Roy16}), we say a sequence $(u_k)\subset H^1_{0,\mathrm{rad}}(B,w_\beta)$ is a normalized concentrating sequence (NCS for short) if
\begin{equation}\label{eq:concentration-definition}
\|u_k\|_{w_\beta}=1,\;\; u_k\weakto0\text{ in }H^1_{0,\mathrm{rad}}(B,w_\beta)\;\;\mbox{and}\;\;
\int_{B\setminus B_\rho}|\nabla u_k|^2w_\beta\dd x\rightarrow0,
\;\;\mbox{for any}\;\;\rho\in(0,1)
\end{equation}
where $B_\rho$ denotes the disk of radius $\rho$ centered at the origin, and the (critical) concentration level is defined by
\[
\Sigma_{\beta}=\sup\left\{\limsup_{k\to\infty}J_{\alpha_{\beta}, \beta}(u_k):
(u_k)\text{ is NCS}\right\}.
\]
Roy \cite[Lemma~9]{Roy16} established the concentration bound (see also \cite[(1.4)]{Nguyen19})
\begin{equation}\label{concentration-B}
  \Sigma_\beta\le\pi(1+e),\;\; \mbox{for all}\;\; 0\le \beta<1;
\end{equation}
for $\beta=0$, this is the Carleson--Chang concentration level $\Sigma_0=\pi(1+e)$ \cite{CarlesonChang86}.
On the other hand, Nguyen \cite[Theorem~1.1 and Theorem~1.2]{Nguyen19} proved the monotonicity 
\begin{equation}\label{mono-F}
 \mathrm{TM}_{\alpha_{\beta^{\prime}},\beta'}\le\mathrm{TM}_{\alpha_\beta,\beta} \;\;\mbox{and}\;\;\Sigma_{\beta^{\prime}}\le \Sigma_{\beta}\,\;\;\mbox{for}\;\;0\le\beta\le\beta'<1.
\end{equation}
Set $\mathcal S_\beta=\{u\in H^1_{0,\mathrm{rad}}(B,w_\beta)\;:\; \|u\|_{w_\beta}=1\}$.
Let $(u_k)\subset\mathcal{S}_{\beta}$ be a maximizing sequence, that is, $J_{\alpha_\beta,\beta}(u_k)\to \mathrm{TM}_{\alpha_\beta,\beta}$. Up to a subsequence, we may assume that $u_k\weakto u$ in $H^1_{0,\mathrm{rad}}(B,w_\beta)$, and according to \cite[Lemma~5]{Roy16}, either $J_{\alpha_\beta,\beta}(u_k)\to J_{\alpha_\beta,\beta}(u)$, meaning $u$ is an extremal function, or $(u_k)$ concentrates at the origin, in which case $u\equiv0$ and $(u_k)$ is an NCS. In particular, to ensure the existence of a maximizer, we only need to show that $\mathrm{TM}_{\alpha_\beta,\beta}$ is strictly above the concentration level $\Sigma_{\beta}$. To get this, we introduce a class of $n$-node polygonal profiles (cf. Definition~\ref{nodes-define}) for which, with the help of \eqref{mono-F}, we prove a uniform lower bound (cf. Proposition~\ref{prop:uniform-profile}). Then, we exhibit an explicit three-node polygonal profile (cf. Proposition~\ref{prop:explicit}) that ensures
\begin{equation}\nonumber
  \TM_{\alpha_\beta,\beta}>\frac{2267}{600}\pi>\pi(1+e)\ge \Sigma_{\beta},\;\;\mbox{for all}\;\; 0\le \beta<1.
\end{equation}
Our second aim concerns the existence of local maximizers for $J_{\alpha,\beta}$ beyond the critical threshold $\alpha_\beta$. Although the global supremum is then infinite, local maximizers may persist near the critical extremals. Struwe \cite{Struwe88} proved this phenomenon for the classical functional $\beta=0$. For a more careful analysis, we recommend \cite{delPinoMussoRuf12,MalchiodiMartinazzi14} and the references therein. Recently,  Da Silva Paiva and de Oliveira \cite{PaivaOliveira26} established the counterpart of Struwe's result with logarithmic weights for sufficiently small positive $\beta$. Here, with the help of Theorem~\ref{thm:attainment} we are able to extend their argument to the full parameter range $0\le\beta<1$.
\begin{theorem}\label{thm:supercritical}
For every $0\le\beta<1$ there exists $\alpha^{\sharp}>\alpha_{\beta}$ such that $J_{\alpha,\beta}|_{\mathcal{S}_\beta}$ admits a local maximizer for any $0\le\alpha<\alpha^{\sharp}$.
\end{theorem}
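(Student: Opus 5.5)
We follow Struwe's strategy, in the form adapted to logarithmic weights in \cite{PaivaOliveira26}. The only input missing there for general $\beta$ was the existence of a critical extremal, and Theorem~\ref{thm:attainment} now supplies it for all $0\le\beta<1$.

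For $\alpha\le\alpha_\beta$ the supremum $\mathrm{TM}_{\alpha,\beta}$ over $\mathcal S_\beta$ is finite, so a local maximizer is simply a global one on the unit sphere. For $\alpha<\alpha_\beta$ it is attained by the standard subcritical compactness argument: Vitali's theorem together with the Calanchi--Ruf bound \eqref{CR-Moser-sup}, and the strict monotonicity of $J$ under scaling $u\mapsto tu$, which puts the weak limit on $\mathcal S_\beta$. For $\alpha=\alpha_\beta$ it is attained by Theorem~\ref{thm:attainment}. The substance of the statement is therefore the range $\alpha_\beta<\alpha<\alpha^\sharp$.

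For that range, fix $\delta>0$ small and consider the localized problem
\[
M_{\alpha}(\delta)=\sup\{J_{\alpha,\beta}(u):u\in\mathcal S_\beta,\ \dist(u,\mathcal K)<\delta\},
\]
where $\mathcal K$ is the set of maximizers at the critical level $\alpha_\beta$. This set is nonempty by Theorem~\ref{thm:attainment}, and it is compact in $H^1_{0,\mathrm{rad}}(B,w_\beta)$. Compactness holds because any sequence of maximizers is a maximizing sequence, hence by \cite[Lemma~5]{Roy16} either converges strongly or concentrates. Concentration is excluded since $\mathrm{TM}_{\alpha_\beta,\beta}>\Sigma_\beta$, so the convergence is strong.

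The key steps, in order, are the following.
\begin{enumerate}[label=(\roman*)]
\item \emph{Uniform integrability near $\mathcal K$.} Using radial symmetry and the Calanchi--Ruf pointwise estimate, every $u$ in the $\delta$-neighbourhood writes as $u=v+w$ with $v\in\mathcal K$ and $\|w\|_{w_\beta}<\delta$. Then, by Young's inequality applied to $|v+w|^{2/(1-\beta)}$, the family $e^{\alpha|u|^{2/(1-\beta)}}$ is bounded in $L^{p}(B)$ for some $p>1$, uniformly for $\alpha$ in a neighbourhood of $\alpha_\beta$. This is the step that requires $\alpha$ to be close to $\alpha_\beta$.
\item \emph{Attainment of $M_\alpha(\delta)$.} The bound in (i) gives weak continuity of $J_{\alpha,\beta}$ on the closed neighbourhood, so $M_\alpha(\delta)$ is attained at some $u_\alpha$.
\item \emph{Interior location.} We must show $u_\alpha$ lies in the open neighbourhood rather than on its boundary $\dist=\delta$. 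By compactness of $\mathcal K$ and the strong convergence dichotomy,
\[
\sup\{J_{\alpha_\beta,\beta}(u):u\in\mathcal S_\beta,\ \dist(u,\mathcal K)=\delta\}<\mathrm{TM}_{\alpha_\beta,\beta}-\eta
\]
for some $\eta>0$. The uniform integrability from (i) makes $\alpha\mapsto J_{\alpha,\beta}$ continuous, uniformly on the neighbourhood. So for $\alpha<\alpha^\sharp$ the boundary values stay below $J_{\alpha,\beta}(v)$ for any $v\in\mathcal K$, and $u_\alpha$ is interior, hence a local maximizer on $\mathcal S_\beta$.
\end{enumerate}

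The main obstacle is step (i) combined with the boundary gap in (iii). We must exclude a sequence on $\{\dist=\delta\}$ whose values approach $\mathrm{TM}_{\alpha_\beta,\beta}$. Such a sequence would be maximizing, so it would either converge strongly, contradicting $\dist=\delta$, or concentrate, giving values at most $\Sigma_\beta<\mathrm{TM}_{\alpha_\beta,\beta}$. Here the strict inequality $\mathrm{TM}_{\alpha_\beta,\beta}>\Sigma_\beta$ for \emph{all} $\beta$, coming from Proposition~\ref{prop:explicit}, is exactly what extends the argument of \cite{PaivaOliveira26} to the full range.
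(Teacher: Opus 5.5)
Your outline follows the same Struwe scheme the paper uses through Lemmas~\ref{lem:extension3} and~\ref{lem:extension5}: compactness of $\mathcal K$, uniform exponential integrability near $\mathcal K$, a strict gap away from $\mathcal K$, and continuity in $\alpha$. However, step (ii) does not follow from what you state. The constraint set $\{u\in\mathcal S_\beta:\dist(u,\mathcal K)\le\delta\}$ is not weakly closed, because the sphere $\mathcal S_\beta$ is not. Take a maximizing sequence $u_k\weakto u$ for $M_\alpha(\delta)$. Then (i) and Vitali give $J_{\alpha,\beta}(u)=M_\alpha(\delta)$, but only $\|u\|_{w_\beta}\le1$, so $u$ need not be admissible. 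Renormalizing gives $J_{\alpha,\beta}(u/\|u\|_{w_\beta})>M_\alpha(\delta)$, yet $u/\|u\|_{w_\beta}$ may leave the $\delta$-neighbourhood. Indeed, write $u_k=v_k+w_k$ with $v_k\in\mathcal K$ nearest, and pass to the limit ($v_k\to v$ by compactness). You only get $\|u\|_{w_\beta}\ge1-\delta$ and $\dist(u/\|u\|_{w_\beta},\mathcal K)\le(1-\|u\|_{w_\beta})+\|u-v\|_{w_\beta}\le2\delta$. So weak continuity alone does not produce a maximizer on $\mathcal S_\beta$. Your gap in (iii) controls $J$ only on the sphere $\{\dist(u,\mathcal K)=\delta\}$, so it cannot rule out this loss of norm.

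This is exactly why Lemma~\ref{lem:extension5} (following Struwe) is stated with a strict inequality on the whole shell $N_{2\eps}\setminus N_\eps$. The repair uses only your own ingredients:
\begin{enumerate}[label=(\alph*)]
\item The contradiction argument of (iii) works verbatim on $\{\delta\le\dist(u,\mathcal K)\le2\delta\}$. A sequence there with $J_{\alpha_\beta,\beta}\to\mathrm{TM}_{\alpha_\beta,\beta}$ is maximizing, hence converges strongly into $\mathcal K$, which is impossible.
\item Estimate (i) holds on the $2\delta$-neighbourhood for $\delta$ small.
\item Hence, for $\alpha$ close to $\alpha_\beta$ and any $v_0\in\mathcal K$, on the shell you have $J_{\alpha,\beta}<\mathrm{TM}_{\alpha_\beta,\beta}\le J_{\alpha,\beta}(v_0)\le M_\alpha(\delta)$.
\item Then $u/\|u\|_{w_\beta}$ can lie neither in the shell nor in the $\delta$-neighbourhood. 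This forces $\|u\|_{w_\beta}=1$, hence strong convergence $u_k\to u$, and by the same shell bound $\dist(u,\mathcal K)<\delta$.
\end{enumerate}
Incidentally, closeness of $\alpha$ to $\alpha_\beta$ is needed for the gap, not for (i). For small $\delta$, (i) holds on a fixed interval such as $[\alpha_\beta,2\alpha_\beta]$.
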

The proof of Theorem~\ref{thm:supercritical} uses an approach close to the one in \cite{Struwe88}, which has already been successfully applied in \cite{PaivaOliveira26} for $\beta$ small enough and in \cite{deOliveiradeSouzaMedeiros} for the singular Trudinger--Moser functional.

The third aim is to obtain a quantitative estimate for the classical Trudinger--Moser supremum on the unit disk. In the computer experiments reported by Carleson and Chang \cite[Section~4]{CarlesonChang86}, the largest value of the functional found was approximately $4.3556\,\pi$, and explicit upper bounds were obtained by Kim \cite{Kim00} and, more recently, by Li and Sun \cite{LiSun26} (see Remark~\ref{rem:numerical-classical}). An explicit thirty-node profile and Jensen's inequality suffice to improve on the value of Carleson and Chang.
\begin{theorem}\label{thm:classical-bounds}
The Trudinger--Moser supremum \eqref{Moser-sup} satisfies $\mathrm{TM}_{4\pi}>4.38421793217\,\pi$.
\end{theorem}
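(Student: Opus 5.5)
Since $\mathrm{TM}_{4\pi}$ is a supremum, one admissible test function with $J_{4\pi}(u)>4.38421793217\,\pi$ suffices. We work with radial functions in the Moser variables. For radial $u$, set $|x|=e^{-t/2}$ and $w(t)=\sqrt{4\pi}\,u(x)$, so that $t\in[0,\infty)$ and $w(0)=0$. Then
\[
\|\nabla u\|_2^2=\int_0^\infty \dot w(t)^2\dd t,\qquad J_{4\pi}(u)=\pi\int_0^\infty e^{w(t)^2-t}\dd t .
\]
The task therefore becomes: exhibit a $w$ with $\int_0^\infty\dot w^2\le 1$ and $\int_0^\infty e^{w^2-t}\dd t>4.38421793217$.

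We take $w$ piecewise linear (a polygonal profile in the sense of Definition~\ref{nodes-define}) with thirty nodes $0=t_0<t_1<\dots<t_{29}$ and values $0=w_0<w_1<\dots<w_{29}$, continued with constant value $w_{29}$ for $t\ge t_{29}$. The node data would be chosen by numerically optimizing, then rounded to rationals.

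The constraint becomes the exact identity
\[
\sum_i \frac{(w_{i+1}-w_i)^2}{t_{i+1}-t_i}\le 1 .
\]
It can be checked in rational arithmetic, possibly after scaling the $w_i$ down slightly to secure strict inequality.

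The tail contributes exactly $\int_{t_{29}}^\infty e^{w_{29}^2-t}\dd t=e^{w_{29}^2-t_{29}}$. This requires rational lower bounds for exponentials, obtained from truncated Taylor series, which bound $e^y$ from below for $y\ge 0$. For negative arguments one uses $e^{y}\ge 1/\sum_{k\le K}(-y)^k/k!$, which holds since that partial sum is at most $e^{-y}$.

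On each linear segment, the integral $\int_{t_i}^{t_{i+1}}e^{w^2-t}\dd t$ is bounded below by Jensen's inequality applied to the uniform probability measure on the segment:
\[
\int_{t_i}^{t_{i+1}}e^{w^2-t}\dd t\ \ge\ (t_{i+1}-t_i)\exp\Bigl(\frac{1}{t_{i+1}-t_i}\int_{t_i}^{t_{i+1}}(w^2-t)\dd t\Bigr).
\]
The averaged exponent is an explicit rational expression, namely $\tfrac13(w_i^2+w_iw_{i+1}+w_{{i+1}}^2)-\tfrac12(t_i+t_{i+1})$. Each term is then bounded below by a rational via the Taylor estimates above, and summing the thirty pieces plus the tail gives a rational number to be compared with $4.38421793217$.

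The main obstacle is the loss in Jensen's inequality. Its size is governed by the variation of $w^2-t$ over each segment, which is largest near the concentration region where $w^2-t$ is nearly flat but the segments are long. I would first place the nodes densely, geometrically refined, where the profile bends, and check numerically that the Jensen deficit stays below the available margin above the target (roughly $0.0002$). If thirty nodes are insufficient with plain Jensen, I would split each segment into a few equal Jensen subintervals; this leaves the node count of the profile unchanged and only adds rational bookkeeping.
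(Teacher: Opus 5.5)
Your overall route is the paper's: the Moser variable $t=2\log(1/|x|)$, a polygonal profile, Jensen's inequality on each affine piece with mean exponent $\frac13(w_i^2+w_iw_{i+1}+w_{i+1}^2)-\frac12(t_i+t_{i+1})$, and the exact constant tail. However, your certification step rests on a reversed inequality. From $\sum_{k\le K}(-y)^k/k!\le e^{-y}$ you get $1/\sum_{k\le K}(-y)^k/k!\ge e^{y}$. That is an \emph{upper} bound for $e^{y}$, not a lower one: already $K=1$, $y=-1$ would give $e^{-1}\ge\frac12$.

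This is not peripheral. Since $w(t)=\int_0^t\dot w\le\sqrt t$ by Cauchy--Schwarz, every Jensen exponent and the tail exponent are $\le0$ (compare Lemma~\ref{lem:jensen-rational}). So the positive-argument Taylor bound is never needed, and every term of your rational sum would overshoot the quantity it is supposed to bound from below. The final comparison with $4.38421793217$ would then prove nothing. The repair is routine, with three options:
\begin{enumerate}[label=(\roman*)]
\item for $y\le0$, use an odd-degree Taylor polynomial, $\sum_{k=0}^{2m+1}y^k/k!\le e^{y}$;
\item use $e^{y}\ge1/U$ with $U$ a rational \emph{upper} bound for $e^{-y}$;
\item use the paper's device: $e^{-1/8}\ge(127/129)^8>368/417$ (from $\log\frac{1+z}{1-z}\ge2z$), combined with $e^{q}\ge e^{-k/8}\bigl(1+s+\frac{s^2}2\bigr)$, where $k=\lceil-8q\rceil$ and $s=q+\frac k8\in[0,\frac18)$.
\end{enumerate}

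The second gap is that no profile is actually exhibited. The statement is a bare numerical inequality, so its proof \emph{is} an explicit rational profile together with checked arithmetic. Your phrases ``would be chosen by numerically optimizing'' and ``if thirty nodes are insufficient'' leave exactly this undone. The paper supplies it: thirty segments with uniform spacing $h=13/30$ and heights $y_j=Y_j/3000$ from Table~\ref{tab:thirty-node-heights}. These give $\theta_0(\phi)=71/750$, exponents $q_j=-m_j/2556000$, tail exponent $-173/71$, and the grouped bound of Table~\ref{tab:classical-index-groups}, which evaluates to exactly $4.38421793217$.

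Be aware that the margin is thin. The optimized uniform thirty-segment value is only about $4.38487$ (Table~\ref{tab:numerical-classical}). Your count $t_0<\dots<t_{29}$ gives twenty-nine segments, which may fall short with plain Jensen. For the same reason, normalize exactly rather than scaling the $w_i$ down: take $w_i\mapsto w_i/\sqrt{E}$ with $E=\sum(w_{i+1}-w_i)^2/(t_{i+1}-t_i)$, which keeps every exponent rational.
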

The lower bound $4.38421793217\,\pi$ follows from a direct estimate that uses only elementary
inequalities and exact rational arithmetic.

\begin{remark}\label{rem:numerical-classical}
Kim \cite[Theorem~3.5]{Kim00} proved an upper bound for $\mathrm{TM}_{4\pi}$ with
explicit constants; with the constants given in \cite[Lemma~3.7]{Kim00}, it
yields $\mathrm{TM}_{4\pi}\le2^{4/3}e^{\frac13+\frac{3}{4e}}\,\pi<4.6341\,\pi$
(cf. Remark on p.~451 of \cite{Kim00}).
More recently, Li and Sun proved, by a computer-assisted argument based on
interval arithmetic, that
$4.40273202\,\pi\le\mathrm{TM}_{4\pi}\le4.40273208\,\pi$
(see the proof of \cite[Proposition~7.7]{LiSun26}). The lower bound in
Theorem~\ref{thm:classical-bounds} is weaker, but its proof is purely
analytic. A computer-assisted analysis, based on the numerical optimization of
the spacing and of the heights of polygonal profiles with up to $102400$
uniformly spaced nodes, raises this lower bound to
$4.402732044\,\pi$, which slightly improves the lower bound of Li and Sun. The values obtained in these
experiments are listed in Table~\ref{tab:numerical-classical} in
Appendix~\ref{app:numerical-classical}.
\end{remark}

The paper is organized as follows. Section~\ref{sec:lower} gives the test functions and the uniform lower bound and proves the critical attainment stated in Theorem~\ref{thm:attainment}. In Section~\ref{sec:supercritical}, we extend Lemmas~3--5 of \cite{PaivaOliveira26} to the full range $0\le\beta<1$ and prove Theorem~\ref{thm:supercritical}. Section~\ref{sec:rational-classical} proves Theorem~\ref{thm:classical-bounds} using thirty nodes. Appendix~\ref{app:numerical-classical} lists the values obtained in numerical experiments with finer profiles.

%======================================================================
\section{A direct uniform lower bound}\label{sec:lower}
%======================================================================
In this section, we establish the uniform lower bound
$\mathrm{TM}_{\alpha_\beta,\beta}>\frac{2267}{600}\pi>\pi(1+e)$ for every
$\beta\in[0,1)$. Together with the concentration estimate
\eqref{concentration-B} and the concentration--compactness results in
\cite[Lemma~5]{Roy16} and \cite[Lemma~2.2]{Nguyen19}, this bound rules out concentration of maximizing
sequences and leads to the proof of Theorem~\ref{thm:attainment}.
\begin{definition}\label{nodes-define}
  Let $n\in\mathbb{N}$ and $\phi:[0,\infty)\to[0,\infty)$ be a function. We say that $\phi$ is an $n$-node polygonal profile if there exist  $0=t_0<t_1<\cdots<t_n $ and $0=y_0<y_1\le\cdots\le y_n$
such that $\phi(0)=0$, $\phi(t_j)=y_j$, $\phi$ is affine on $[t_{j-1},t_j]$, and $\phi=y_n$ on $[t_n,\infty)$.
\end{definition}
\begin{lemma}\label{lem:test}
Let $\phi$ be an $n$-node polygonal profile on $[0,\infty) $ and set
\begin{equation}\label{eq:direct-test}
v_{\beta,\phi}(x)=\alpha_\beta^{-\frac{1-\beta}{2}}
\Big[\phi\Big(2\log\frac1{|x|}\Big)\Big]^{1-\beta},\;\;\mbox{for all}\;\;
 x\in B\setminus\{0\}.
\end{equation}
 Then $v_{\beta,\phi}\in H^1_{0,\mathrm{rad}}(B,w_\beta)$ and
\begin{equation}\label{eq:normalization}
\|v_{\beta,\phi}\|_{w_\beta}^2=(1-\beta)\int_0^{t_n}\phi'(t)^2
\left(\frac{t}{\phi(t)^2}\right)^\beta\dd t.
\end{equation}
\end{lemma}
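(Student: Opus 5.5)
The plan is to reduce everything to a one-dimensional computation in the variable $t=2\log\frac1{|x|}$. Writing $r=|x|=e^{-t/2}$, a radial function $v(x)=g(t)$ satisfies $|\nabla v|=|g'(t)|\,\frac{2}{r}$. Moreover, $w_\beta=(-\log r)^\beta=(t/2)^\beta$ and $\dd x=2\pi r\dd r$ with $\dd r=-\frac r2\dd t$. Hence
\begin{equation*}
\int_B|\nabla v|^2w_\beta\dd x=\int_0^\infty g'(t)^2\frac{4}{r^2}\Big(\frac t2\Big)^\beta\pi r^2\dd t=4\pi\,2^{-\beta}\int_0^\infty g'(t)^2t^\beta\dd t .
\end{equation*}
Now take $g=\alpha_\beta^{-\frac{1-\beta}2}\phi^{1-\beta}$. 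On each open interval $(t_{j-1},t_j)$ with $\phi>0$ we have $g'=\alpha_\beta^{-\frac{1-\beta}2}(1-\beta)\phi^{-\beta}\phi'$, and $g'=0$ for $t>t_n$. So the integrand becomes $\alpha_\beta^{-(1-\beta)}(1-\beta)^2\phi'^2\,t^\beta\phi^{-2\beta}$.

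The constant then needs to be checked. Since $\alpha_\beta^{1-\beta}=2^{1-\beta}\,2\pi(1-\beta)=4\pi(1-\beta)2^{-\beta}$, we get $4\pi2^{-\beta}(1-\beta)^2/\alpha_\beta^{1-\beta}=(1-\beta)$, which is exactly \eqref{eq:normalization}. The integral is finite: on $[t_1,t_n]$ we have $\phi\ge y_1>0$ and $\phi'$ bounded. On $(0,t_1]$, $\phi(t)=(y_1/t_1)t$, so the integrand is $(y_1/t_1)^{2-2\beta}t^{-\beta}$, which is integrable because $\beta<1$.

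Membership in $H^1_{0,\mathrm{rad}}(B,w_\beta)$ is the main obstacle, since the space is a completion of $C^\infty_{0,\mathrm{rad}}(B)$. The function $v=v_{\beta,\phi}$ is continuous on $\overline B\setminus\{0\}$. It vanishes on $\partial B$, because $t=0$ there and $\phi(0)=0$. It is locally Lipschitz away from the origin and constant $=\alpha_\beta^{-\frac{1-\beta}2}y_n^{1-\beta}$ on $B_{e^{-t_n/2}}$, so it is actually bounded and Lipschitz away from $\partial B$. Near $\partial B$ it behaves like $c(-\log r)^{1-\beta}$, which is not Lipschitz at $r=1$, although its weighted energy is finite.

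I would approximate in two steps. First, replace $\phi$ near $0$ by the truncation $\phi_\eps=(\phi-\phi(\eps))^+$ rescaled, or simply by $\max(\phi,\cdot)$-type cutoffs, so that $v_\eps$ vanishes near $\partial B$. By the explicit formula and dominated convergence on $(0,t_1]$, the weighted energy of $v-v_\eps$ tends to zero. A cleaner choice is $v_\eps=(v-\eps)^+$, which has support in a compact subset of $B$. Its gradient equals $\nabla v$ on $\{v>\eps\}$, so $\|v-v_\eps\|_{w_\beta}^2=\int_{\{v<\eps\}}|\nabla v|^2w_\beta\to0$ by absolute continuity of the integral.

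Second, $v_\eps$ is Lipschitz, radial and compactly supported in $B$, and the weight $w_\beta$ is bounded on its support. Standard radial mollification then gives $C^\infty_{0,\mathrm{rad}}$ functions converging to it in the weighted norm. This shows $v\in H^1_{0,\mathrm{rad}}(B,w_\beta)$, and its norm is given by the computation above.
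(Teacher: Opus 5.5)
Your proof is correct. The norm computation in the variable $t=2\log\frac1{|x|}$ is the same as the paper's \eqref{change-norm}; where you differ is in how you show membership in $H^1_{0,\mathrm{rad}}(B,w_\beta)$.

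\textbf{The paper's route.} It passes to the variable $s=(2\log\frac1{|x|})^{1-\beta}$. Then $v_{\beta,\phi}=\alpha_\beta^{-\frac{1-\beta}{2}}\eta(s)$ with $\eta(s)=[\phi(s^{\frac1{1-\beta}})]^{1-\beta}$, and the weighted energy becomes exactly $\int_0^\infty|\eta'(s)|^2\dd s$, with no weight left. The paper approximates $\eta'$ in $L^2(0,\infty)$ by $h_k\in C^\infty_0(0,\infty)$ and sets $\eta_k(s)=\int_0^s h_k$. The resulting $v_k$ lie directly in $C^\infty_{0,\mathrm{rad}}(B)$: they vanish near $\partial B$ and are constant near $0$. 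Density is therefore obtained in one step, with smooth approximants written down explicitly. This uses the fact that the radial weighted space is isometric to a weight-free space on the half-line.

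\textbf{Your route.} You truncate to $(v-\eps)^+$ to move the support away from $\partial B$, then mollify radially. This is the standard two-step density argument. It does not use the special structure of the weight, so it applies to any radial function that is locally Lipschitz in $B$, vanishes on $\partial B$ and has finite weighted energy. The price is a second approximation step in $\R^2$, where the weight has to be handled.

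\textbf{One claim is wrong, though it does no harm.} You say that $w_\beta$ is bounded on the support of $v_\eps$. That support is a closed disk $\{|x|\le r_\eps\}$ with $r_\eps<1$. It contains the origin, where $w_\beta(x)=(-\log|x|)^\beta\to\infty$ for $\beta>0$. The argument still goes through, for either of two reasons:
\begin{itemize}
\item $v_\eps$ is constant on $B_{e^{-t_n/2}}$, so $\nabla(\rho_\delta*v_\eps)=\rho_\delta*\nabla v_\eps$ vanishes on $B_{e^{-t_n/2}-\delta}$. All gradients then live in a fixed annulus where $w_\beta$ is bounded.
\item Alternatively, the mollified gradients are uniformly bounded by $\|\nabla v_\eps\|_\infty$ and converge a.e., and $w_\beta\in L^1(B)$. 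Dominated convergence then gives convergence in the weighted norm.
\end{itemize}
Either way the step should be stated with this justification rather than boundedness of the weight on the support.
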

\begin{proof}
Set
\[
\eta(s)=
\Big[\phi\left(s^{\frac{1}{1-\beta}}\right)\Big]^{1-\beta},
\;\;\mbox{for all}\;\; s\ge0.
\]
Then, from the $\phi$ profile, we have that $\eta(s)=a^{1-\beta}s$ on $[0,t_1^{1-\beta}]$ with $a={y_1}/{t_1}>0$, and is constant on $[t_n^{1-\beta},\infty)$. On the remaining intervals, \(\eta\) is piecewise smooth with bounded derivative. Thus \(\eta\) is locally Lipschitz, \(\eta(0)=0\), and $\eta'\in L^2(0,\infty).$ By density, choose $(h_k)\subset C^\infty_0(0,\infty)$ such that \(h_k\to\eta'\) in \(L^2(0,\infty)\), and define $\eta_k(s)=\int_0^s h_k(\tau)\,\mathrm d\tau.$
Now set
$$
v_k(x)=
\alpha_\beta^{-\frac{1-\beta}{2}}
\eta_k\Big(
\Big(2\log\frac{1}{|x|}\Big)^{1-\beta}
\Big),
\;\;\mbox{for}\;\; x\in B\setminus\{0\}.
$$
Because $h_k$ has compact support in $(0,\infty)$, the function $\eta_k$ vanishes near zero and is constant for sufficiently large $s$. So, $v_k$ vanishes near $\partial B$ and is constant near the origin. Extending it by that constant at the origin gives $v_k\in C^\infty_{0,\mathrm{rad}}(B).$
The function $v_{\beta,\phi}$ also extends by a constant at the origin and $v_{\beta,\phi}(x)=0$ if $|x|=1$. The substitution $s=(2\log \frac{1}{r})^{1-\beta}$, together with
$\alpha_\beta^{1-\beta}=4\pi (1-\beta)2^{-\beta}$, gives
\[
\int_B|\nabla(v_k-v_{\beta,\phi})|^2w_\beta\dd x
=\int_0^\infty|\eta_k'(s)-\eta'(s)|^2\dd s
=\|h_k-\eta'\|_{L^2(0,\infty)}^2\rightarrow0,\;\;\mbox{as}\;\; k\to \infty.
\]
Thus, $v_{\beta,\phi}\in H^1_{0,\mathrm{rad}}(B,w_\beta)$. Finally, from the substitution $t=2\log\frac{1}{r}$, we obtain
\begin{equation}\label{change-norm}
\begin{aligned}
\|v_{\beta,\phi}\|_{w_\beta}^2
&=2\pi\int_0^1
|v_{\beta,\phi}'(r)|^2
\left(\log\frac{1}{r}\right)^\beta r\,\mathrm dr\\
&=4\pi(1-\beta)^2
\alpha_\beta^{-(1-\beta)}2^{-\beta}
\int_0^\infty
|\phi'(t)|^2\phi(t)^{-2\beta}t^\beta\,\mathrm dt\\
&=(1-\beta)\int_0^{\infty}
|\phi'(t)|^2
\left(\frac{t}{\phi(t)^2}\right)^\beta\,\mathrm dt
\end{aligned}
\end{equation}
and \eqref{eq:normalization} holds.
\end{proof}
\begin{lemma}\label{lem:test2}
Let $\phi$ and $v_{\beta,\phi}$ be as in Lemma~\ref{lem:test}. Set 
\begin{equation}\label{eq:theta}
\theta_\beta(\phi)=\|v_{\beta,\phi}\|^{\frac{2}{1-\beta}}_{w_{\beta}}
\;\;\mbox{and}\;\;
u_{\beta,\phi}=\frac{v_{\beta,\phi}}{\|v_{\beta,\phi}\|_{w_\beta}}.
\end{equation}
Then 
\begin{equation}\label{eq:direct-lower}
\|u_{\beta,\phi}\|_{w_{\beta}}=1\;\;\;\mbox{and}\;\;\;\mathrm{TM}_{\alpha_\beta,\beta}\ge J_{\alpha_\beta,\beta}(u_{\beta,\phi})=\pi\int_0^\infty
e^{\frac{\phi(t)^2}{\theta_\beta(\phi)}-t}\dd t.
\end{equation}
\end{lemma}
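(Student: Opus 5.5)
The plan is to verify the normalization and then compute $J_{\alpha_\beta,\beta}(u_{\beta,\phi})$ by the substitution $t=2\log\frac1{|x|}$ already used in Lemma~\ref{lem:test}.

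\emph{Normalization.} Since $\phi$ is an $n$-node profile with $y_1>0$, the integrand in \eqref{eq:normalization} is nonnegative and strictly positive on $(0,t_1)$. Hence $\|v_{\beta,\phi}\|_{w_\beta}>0$, and $u_{\beta,\phi}$ is well defined. It is also finite: near $t=0$ we have $\phi(t)=at$, so the integrand behaves like $a^{2-2\beta}t^{-\beta}$, which is integrable because $\beta<1$. By Lemma~\ref{lem:test}, $u_{\beta,\phi}\in H^1_{0,\mathrm{rad}}(B,w_\beta)$, and $\|u_{\beta,\phi}\|_{w_\beta}=1$ by homogeneity of the norm. The inequality $\mathrm{TM}_{\alpha_\beta,\beta}\ge J_{\alpha_\beta,\beta}(u_{\beta,\phi})$ then follows directly from the definition \eqref{CR-Moser-sup}.

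\emph{The exponent.} Since $\phi\ge0$, we have $u_{\beta,\phi}\ge0$, and
\[
\alpha_\beta|u_{\beta,\phi}|^{\frac{2}{1-\beta}}
=\alpha_\beta\,\alpha_\beta^{-1}\,\phi\Big(2\log\tfrac1{|x|}\Big)^{2}\,\|v_{\beta,\phi}\|_{w_\beta}^{-\frac{2}{1-\beta}}
=\frac{\phi(t)^2}{\theta_\beta(\phi)},\qquad t=2\log\tfrac1{|x|}.
\]
Here the exponent of $\alpha_\beta$ cancels exactly, because $\big(\alpha_\beta^{-\frac{1-\beta}{2}}\big)^{\frac{2}{1-\beta}}=\alpha_\beta^{-1}$. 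This cancellation is the reason for the choice of the prefactor in \eqref{eq:direct-test}.

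\emph{The integral.} Pass to polar coordinates: $\int_B f\,\dd x=2\pi\int_0^1 f(r)\,r\,\dd r$. With $r=e^{-t/2}$ we get $r\,\dd r=-\tfrac12 e^{-t}\,\dd t$, and $r\in(0,1)$ corresponds to $t\in(0,\infty)$. Therefore
\[
J_{\alpha_\beta,\beta}(u_{\beta,\phi})=2\pi\cdot\tfrac12\int_0^\infty e^{\frac{\phi(t)^2}{\theta_\beta(\phi)}-t}\,\dd t,
\]
which is \eqref{eq:direct-lower}. The computation is valid in $[0,\infty]$ by monotone convergence, since the integrand is positive.

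There is no real obstacle. The only points needing care are the positivity and finiteness of the norm (handled above) and tracking the constants in the exponent. Finiteness of the final integral is not required for the inequality. It does hold, however, whenever $y_n^2<\theta_\beta(\phi)$, and in any case it follows from \eqref{CR-Moser-sup}.
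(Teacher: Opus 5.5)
Your proposal is correct and follows essentially the same route as the paper's proof. Both arguments get positivity of the norm from the first affine piece (the paper makes it quantitative, $\|v_{\beta,\phi}\|_{w_\beta}^2\ge (y_1^2/t_1)^{1-\beta}$), use the cancellation $\alpha_\beta|u_{\beta,\phi}|^{2/(1-\beta)}=\phi(t)^2/\theta_\beta(\phi)$, and then substitute $t=2\log\frac1r$. One small side remark: the final integral is always finite, with no condition like $y_n^2<\theta_\beta(\phi)$ needed, since $\phi\le y_n$ makes the tail equal to $e^{y_n^2/\theta_\beta(\phi)-t_n}$.
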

\begin{proof}
By Lemma~\ref{lem:test}, $v_{\beta,\phi}\in
H^1_{0,\mathrm{rad}}(B,w_\beta)$ and
$\|v_{\beta,\phi}\|_{w_\beta}<\infty$.
Since $\phi(t)=(y_1/t_1)t$ on $[0,t_1]$,
\eqref{eq:normalization} gives
\begin{align*}
\|v_{\beta,\phi}\|_{w_\beta}^2
&\ge(1-\beta)\left(\frac{y_1}{t_1}\right)^{2(1-\beta)}
\int_0^{t_1}t^{-\beta}\dd t\\
&=\left(\frac{y_1^2}{t_1}\right)^{1-\beta}>0.
\end{align*}
Thus $u_{\beta,\phi}$ is well defined and
belongs to $H^{1}_{0, \operatorname{rad}}(B, w_{\beta})$ with $\|u_{\beta,\phi}\|_{w_\beta}
=1$. For $0<r<1$, definition \eqref{eq:direct-test} yields
\begin{align*}
\alpha_\beta|u_{\beta,\phi}(r)|^{\frac{2}{1-\beta}}=\frac{\big[\phi\left(2\log\frac1r\right)\big]^2}
{\theta_\beta(\phi)}.
\end{align*}
Then, the change of variable $t=2\log(1/r)$ implies 
\begin{align*}
J_{\alpha_\beta,\beta}(u_{\beta,\phi})
&=2\pi\int_0^1 e^{\frac{\big[\phi\big(2\log\frac1r\big)\big]^2}
{\theta_\beta(\phi)}}r\dd r\\
&=\pi\int_0^\infty
e^{\frac{\phi(t)^2}{\theta_\beta(\phi)}-t}\dd t.
\end{align*}
Finally, since $u_{\beta,\phi}$ is admissible in the definition of
$\mathrm{TM}_{\alpha_\beta,\beta}$, we obtain $J_{\alpha_\beta,\beta}(u_{\beta,\phi})\le\mathrm{TM}_{\alpha_\beta,\beta}$.
\end{proof}
\begin{lemma}\label{lem:limit} Let $\phi$ be an $n$-node polygonal profile on $[0,\infty) $. Then, we have 
\begin{equation}\label{eq:theta-limit}
\lim_{\beta\rightarrow 1^{-}}\theta_\beta(\phi)
:=\theta_1(\phi)
=\frac{y_1^2}{t_1}
e^{\int_{t_1}^{t_n}\frac{t\phi'(t)^2}{\phi(t)^2}\dd t},
\end{equation}
where $\theta_{\beta}(\phi)$, $0\le \beta<1$ is defined in \eqref{eq:theta}. In addition, if $a_j=(y_{j+1}-y_j)/(t_{j+1}-t_j)$ and $b_j=y_j-a_jt_j$ for $1\le j<n$, then
\begin{equation}\label{eq:theta-explicit}
\theta_1(\phi)=\frac{y_1y_n}{t_1}e^{\sum_{j=1}^{n-1}b_j
\big(\frac1{y_{j+1}}-\frac1{y_j}\big)}.
\end{equation}
\end{lemma}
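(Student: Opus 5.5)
Using \eqref{eq:normalization}, write
\[
\theta_\beta(\phi)=\big[(1-\beta)I_0(\beta)+I_1(\beta)\big]^{\frac1{1-\beta}},
\]
where $I_0$ is the contribution of $[0,t_1]$ and $I_1(\beta)=(1-\beta)\int_{t_1}^{t_n}\phi'(t)^2\big(t/\phi(t)^2\big)^\beta\dd t$. On $[0,t_1]$ we have $\phi(t)=at$ with $a=y_1/t_1$. The computation already done in the proof of Lemma~\ref{lem:test2} gives the exact value
\[
(1-\beta)\int_0^{t_1}a^{2-2\beta}t^{-\beta}\dd t=\Big(\frac{y_1^2}{t_1}\Big)^{1-\beta}.
\]
So, setting $A=y_1^2/t_1$ and $g(t)=\phi'(t)^2$, $h(t)=t/\phi(t)^2$ on $[t_1,t_n]$, we have
\[
\theta_\beta(\phi)=\Big[A^{1-\beta}+(1-\beta)\int_{t_1}^{t_n}g\,h^\beta\dd t\Big]^{\frac1{1-\beta}}.
\]

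Next set $\eps=1-\beta\to0^+$ and expand each piece to first order in $\eps$. First, $A^{\eps}=1+\eps\log A+o(\eps)$. Second, $h$ is continuous and bounded between positive constants on $[t_1,t_n]$, because $\phi\ge y_1>0$ there. Hence $h^{\beta}=h\,h^{-\eps}\to h$ uniformly, and by dominated convergence
\[
\int_{t_1}^{t_n}g\,h^\beta\dd t\to\int_{t_1}^{t_n}\frac{t\phi'(t)^2}{\phi(t)^2}\dd t=:K.
\]
Therefore the bracket equals $1+\eps(\log A+K)+o(\eps)$. Taking logarithms,
\[
\log\theta_\beta(\phi)=\frac{1}{\eps}\log\big(1+\eps(\log A+K)+o(\eps)\big)\to\log A+K.
\]
This gives \eqref{eq:theta-limit}. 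The only delicate point is the uniform control of $h^{\beta}$, and it is handled by the positivity $\phi\ge y_1$ away from the first segment. This is why the first segment must be computed exactly rather than estimated.

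For \eqref{eq:theta-explicit}, compute $K$ segment by segment. On $[t_j,t_{j+1}]$ we have $\phi=a_jt+b_j$ and $\phi'=a_j$. Writing $t=(\phi-b_j)/a_j$ when $a_j\neq0$ and substituting $u=\phi(t)$, $\dd u=a_j\dd t$, we get
\[
\int_{t_j}^{t_{j+1}}\frac{a_j^2t}{\phi^2}\dd t=\int_{y_j}^{y_{j+1}}\frac{u-b_j}{u^2}\dd u=\log\frac{y_{j+1}}{y_j}+b_j\Big(\frac1{y_{j+1}}-\frac1{y_j}\Big).
\]
When $a_j=0$, we have $y_j=y_{j+1}$, and both sides vanish. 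Summing over $j$, the logarithms telescope to $\log(y_n/y_1)$. Hence
\[
\theta_1(\phi)=\frac{y_1^2}{t_1}\cdot\frac{y_n}{y_1}\,e^{\sum_j b_j\left(\frac1{y_{j+1}}-\frac1{y_j}\right)},
\]
which is \eqref{eq:theta-explicit}.
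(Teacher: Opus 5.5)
Your proposal is correct and follows the paper's proof essentially step by step: exact evaluation of the first segment, dominated convergence on $[t_1,t_n]$ using $\phi\ge y_1$, a first-order expansion inside the logarithm, and segmentwise integration with telescoping. The only cosmetic difference is that you integrate each segment by the substitution $u=\phi(t)$, with a separate case $a_j=0$, whereas the paper uses the antiderivative $\log\phi+b_j/\phi$, which covers flat segments automatically.
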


\begin{proof}
Recall that $\phi(t)=(y_1/t_1)t$ on $[0,t_1]$. Thus, setting $E_1=y_1^2/t_1$ and  computing the first affine piece in \eqref{eq:normalization} exactly gives
\[
(1-\beta)\int_0^{t_1}\frac{y_1^2}{t_1^2}
\left(\frac{t_1^2}{y_1^2t}\right)^\beta\dd t
=E_1^{1-\beta}.
\]
Consequently,
\begin{equation}\label{eq:first-piece}
\|v_{\beta,\phi}\|_{w_\beta}^2=E_1^{1-\beta}+(1-\beta)R_\beta, \;\;\mbox{where}\;\;
R_\beta=\int_{t_1}^{t_n}\phi'(t)^2
\left(\frac{t}{\phi(t)^2}\right)^\beta\dd t.
\end{equation}
Noticing that $\phi\ge y_1>0$ on $[t_1, \infty)$, we can apply the dominated convergence to obtain
\[
R_\beta\rightarrow R_1
:=\int_{t_1}^{t_n}\frac{t\phi'(t)^2}{\phi(t)^2}\dd t,\;\; \mbox{as}\;\; \beta\rightarrow 1^{-}.
\]
Since 
\begin{align*}
  E_1^{1-\beta}=e^{(1-\beta)\log E_1}=1+(1-\beta)\log E_1+O((1-\beta)^2),\;\;\mbox{as}\;\; \beta\rightarrow 1^{-}
\end{align*}
we also have 
\begin{equation}\label{E1eR}
E_1^{1-\beta}+(1-\beta)R_\beta
=1+(1-\beta)(\log E_1+R_1)+o(1-\beta)
\end{equation}
as $\beta\rightarrow 1^{-}$.
Hence, by \eqref{eq:first-piece}, \eqref{E1eR}  and using that  $\log(1+s)=s+o(s)$, as $s\to 0$, we can write 
\begin{equation}\label{thetaR*}
\begin{aligned}
 \log\theta_\beta(\phi)&=\frac{1}
 {1-\beta}\log \|v_{\beta,\phi}\|_{w_\beta}^{2} \\
 &=\frac{1}{1-\beta}\log(E_1^{1-\beta}+(1-\beta)R_\beta)\\
 &=\frac{1}{1-\beta}\log\big(1+(1-\beta)(\log E_1+R_1)+o(1-\beta)\big)\\
 &=\log E_1+ R_1+o(1), \;\;\mbox{as}\;\; \beta\rightarrow 1^{-}.
\end{aligned}
\end{equation}
This proves \eqref{eq:theta-limit}. Now, since $\phi$ is an $n$-node polygonal profile, it admits the representation $\phi=b_j+a_jt$ on each interval $[t_j,t_{j+1}]$ for $1\le j<n$ and
\[
\frac{\dd}{\dd t}\left(\log\phi+\frac{b_j}{\phi}\right)
=\frac{t{\phi^{\prime}(t)}^2}{\phi^2(t)},\;\; \mbox{for}\;\;t\in [t_j, t_{j+1}].
\]
Thus, 
\[
\begin{aligned}
\int_{t_j}^{t_{j+1}}
\frac{t\,\phi'(t)^2}{\phi(t)^2}\,\mathrm dt
&=\left[
\log\phi(t)+\frac{b_j}{\phi(t)}
\right]_{t_j}^{t_{j+1}}\\
&=\log\frac{y_{j+1}}{y_j}
+b_j\left(\frac{1}{y_{j+1}}-\frac{1}{y_j}\right).
\end{aligned}
\]
By summing we get
\[
R_1=
\int_{t_1}^{t_n}
\frac{t\,\phi'(t)^2}{\phi(t)^2}\,\mathrm dt
=
\log\frac{y_n}{y_1}
+\sum_{j=1}^{n-1}
b_j\left(\frac{1}{y_{j+1}}-\frac{1}{y_j}\right).
\]
From \eqref{thetaR*}, we have $\theta_1=\frac{y_1^2}{t_1}e^{R_1}$. Thus, the above identity yields
 \eqref{eq:theta-explicit}.
\end{proof}

\begin{proposition}\label{prop:uniform-profile}
For every $\beta\in[0,1)$ and every $n$-node polygonal profile $\phi$ on $[0,\infty)$, we have 
\begin{equation}\label{eq:uniform-profile}
\mathrm{TM}_{\alpha_\beta,\beta}\ge
\pi \int_0^\infty e^{\frac{\phi(t)^2}{\theta_1(\phi)}-t}\dd t.
\end{equation}
\end{proposition}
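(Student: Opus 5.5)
The plan is to combine Lemma~\ref{lem:test2}, the limit in Lemma~\ref{lem:limit}, and Nguyen's monotonicity \eqref{mono-F}. Fix $\beta\in[0,1)$ and an $n$-node polygonal profile $\phi$. For every $\beta'\in[\beta,1)$, \eqref{mono-F} and Lemma~\ref{lem:test2} (applied with $\beta'$ in place of $\beta$) give
\[
\mathrm{TM}_{\alpha_\beta,\beta}\ge \mathrm{TM}_{\alpha_{\beta'},\beta'}\ge \pi\int_0^\infty e^{\frac{\phi(t)^2}{\theta_{\beta'}(\phi)}-t}\dd t .
\]
The profile $\phi$ is the same for all $\beta'$, so only $\theta_{\beta'}(\phi)$ depends on $\beta'$. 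I will let $\beta'\to1^-$ on the right-hand side. By Lemma~\ref{lem:limit}, $\theta_{\beta'}(\phi)\to\theta_1(\phi)$, and $\theta_1(\phi)>0$ is finite by \eqref{eq:theta-limit}. Hence the integrand converges pointwise to $e^{\phi(t)^2/\theta_1(\phi)-t}$.

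To justify passing to the limit, I will apply Fatou's lemma, since the integrands are nonnegative. This gives
\[
\pi\int_0^\infty e^{\frac{\phi^2}{\theta_1}-t}\dd t\le \liminf_{\beta'\to1^-}\pi\int_0^\infty e^{\frac{\phi^2}{\theta_{\beta'}}-t}\dd t\le \mathrm{TM}_{\alpha_\beta,\beta}.
\]
Because this is only a lower bound, Fatou's lemma is all that is needed, and no dominating function is required.

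It is worth noting why the integral is finite anyway. Since $\phi\le y_n$, we have $e^{\phi^2/\theta_{\beta'}-t}\le e^{y_n^2/\theta_{\beta'}}e^{-t}$, and $\theta_{\beta'}$ stays bounded away from $0$ near $\beta'=1$. So dominated convergence would also apply, but Fatou's lemma already suffices.

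The only point requiring care is the direction of monotonicity in \eqref{mono-F}. The supremum is nonincreasing in $\beta$, so the value at the given $\beta$ dominates the values at all larger $\beta'$, and in particular dominates their limit as $\beta'\to1^-$. This is exactly the inequality needed, so I expect no real obstacle beyond quoting \eqref{mono-F} correctly.
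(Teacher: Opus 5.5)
Your proof is correct and follows the paper's own argument: apply the monotonicity \eqref{mono-F} and Lemma~\ref{lem:test2} at $\beta'$, then let $\beta'\to1^-$ using Lemma~\ref{lem:limit}. The only difference is that you pass to the limit with Fatou's lemma instead of dominated convergence (the paper dominates the integrand using $\phi\le y_n$). This is a harmless and slightly leaner variant, since only a lower bound is needed and it also avoids having to invoke the existence of $\lim_{\beta'\to1^-}\mathrm{TM}_{\alpha_{\beta'},\beta'}$.
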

\begin{proof}
Fix $\beta\in [0, 1)$ and choose $\beta'\in(\beta,1)$. The monotonicity in \eqref{mono-F} and Lemma~\ref{lem:test2} give
\[
\mathrm{TM}_{\alpha_\beta,\beta}\ge \mathrm{TM}_{\alpha_{\beta'},\beta'}
\ge\pi\int_0^\infty e^{\frac{\phi(t)^2}{\theta_{\beta'}(\phi)}-t}\dd t.
\]
Since $\phi\le y_n$, from Lemma~\ref{lem:limit} we have 
$$
 \frac{\phi(t)^2}{\theta_{\beta'}(\phi)}\to \frac{\phi(t)^2}{\theta_1(\phi)},\;\;\mbox{for all}\;\;t\ge 0\;\;\mbox{as}\;\;\beta^{\prime}\rightarrow 1^{-}
$$
and 
$$
0\le \frac{\phi(t)^2}{\theta_{\beta'}(\phi)}\le \frac{y^2_n}{\theta_{\beta^{\prime}}(\phi)}\to \frac{y^2_n}{\theta_1(\phi)}\;\;\mbox{as}\;\;\beta^{\prime}\rightarrow 1^{-},\;\; \mbox{for all}\;\; t\ge 0.
$$
Hence, the dominated convergence theorem implies 
\[
\mathrm{TM}_{\alpha_\beta,\beta}\ge \lim_{\beta^{\prime}\rightarrow 1^{-}}\mathrm{TM}_{\alpha_{\beta'},\beta'}
\ge\pi \lim_{\beta^{\prime}\rightarrow 1^{-}}\int_0^\infty e^{\frac{\phi(t)^2}{\theta_{\beta'}(\phi)}-t}\dd t=\pi\int_0^\infty e^{\frac{\phi(t)^2}{\theta_1(\phi)}-t}\dd t.
\]
\end{proof}

\begin{proposition}\label{prop:explicit}
For every $\beta\in[0,1)$, we have $\mathrm{TM}_{\alpha_\beta,\beta}>\frac{2267}{600}\pi>\pi(1+e)$.
\end{proposition}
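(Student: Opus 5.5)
By Proposition~\ref{prop:uniform-profile}, it suffices to exhibit one explicit polygonal profile $\phi$ for which
\[
\pi\int_0^\infty e^{\frac{\phi(t)^2}{\theta_1(\phi)}-t}\dd t>\frac{2267}{600}\pi .
\]
Because that bound holds for every $\beta\in[0,1)$ at once, no further dependence on $\beta$ has to be tracked. The second inequality $\frac{2267}{600}>1+e$ is elementary: $\frac{2267}{600}=3.77833\ldots$, while $1+e<3.7183$ follows from $e<2.7183$, and this can be certified rationally, for instance from $e<\sum_{k=0}^{8}\frac{1}{k!}+\frac{1}{8!\cdot 8}$.

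I would take a three-node profile with rational nodes $t_1<t_2<t_3$ and heights $y_1\le y_2\le y_3$. The guiding heuristic is the Carleson--Chang extremal shape: roughly $\phi(t)^2/\theta_1\approx t$ on a long initial stretch, followed by a plateau. The first node fixes $E_1=y_1^2/t_1$, and the later nodes bend $\phi$ toward the constant $y_3$. By scaling invariance (replacing $\phi$ by $c\phi$ leaves $\phi^2/\theta_1$ unchanged, since $\theta_1$ scales by $c^2$ in \eqref{eq:theta-explicit}), one may normalize $y_1^2=t_1$, so that $E_1=1$.

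Next I would compute $\theta_1(\phi)$ in closed form from \eqref{eq:theta-explicit}. It equals $\frac{y_1y_3}{t_1}$ times the exponential of a rational number. The goal is to choose the data so that this exponent is simple, or else to bound $\theta_1$ from above by a rational number using a truncated series bound for $e^{x}$. Upper-bounding $\theta_1$ only decreases the integrand, so this direction is legitimate.

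The integral then splits over $[0,t_1]$, each $[t_j,t_{j+1}]$, and $[t_3,\infty)$.
\begin{itemize}[leftmargin=*]
\item On $[t_3,\infty)$ the integral is exactly $e^{y_3^2/\theta_1-t_3}$.
\item On $[0,t_1]$ the exponent is the quadratic $ct^2-t$ with $c=y_1^2/(t_1^2\theta_1)$.
\item On the middle pieces the exponent is likewise a quadratic in $t$.
\end{itemize}
Since the exact integrals involve error functions, I would lower-bound them. One option is the convexity inequality $e^{x}\ge 1+x$ applied around a suitable point. Another, following the Jensen-type approach mentioned in the introduction, is
\[
\int_a^b e^{q(t)}\dd t\ge (b-a)\,e^{\frac{1}{b-a}\int_a^b q},
\]
where $\int_a^b q$ is an explicit rational number. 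The resulting exponentials are then bounded below rationally by truncated Taylor series, and these are summed.

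The main obstacle is numerical tightness. The target $3.7783\pi$ lies well above $\pi(1+e)\approx3.718\pi$, but a three-node profile with crude Jensen bounds might fall short. I would first optimize the nodes numerically, then round them to rationals. If a single Jensen bound per piece is too lossy, I would subdivide each interval for the lower-bound estimate while keeping the profile itself three-node, and recheck the margin with exact rational arithmetic.
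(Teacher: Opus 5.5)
Your outline matches the structure of the paper's proof. You reduce to Proposition~\ref{prop:uniform-profile}, fix a three-node profile, and evaluate $\theta_1$ through \eqref{eq:theta-explicit}. You then replace $1/\theta_1$ by a smaller rational, bound the piecewise quadratic exponent from below piece by piece, and finish with rational lower bounds for finitely many exponentials.

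The gap is that you stop before the only step with real content. The proposition is a quantitative claim, and only an explicit certificate establishes it. You never fix the nodes and heights, never compute the resulting exponents, and never check that the final rational sum exceeds $\frac{2267}{600}$. That constant is itself the output of one specific profile and one specific set of estimates. ``Optimize numerically, round, and recheck'' describes how to search for a proof, and your own worry that a three-node profile might fall short is left unresolved.

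The paper supplies the certificate with the profile $\phi_\natural$ with nodes $(8,12)$, $(10,14)$, $(12,15)$, for which $\theta_1(\phi_\natural)=\frac{45}{2}e^{-19/210}$. Then $e^{19/210}>\frac{35}{32}$ gives $1/\theta_1>\frac{7}{144}$, and one works with $q(t)=\frac{7}{144}\phi_\natural(t)^2-t$. Your concern about lossy estimates is justified here. On the long first piece $[0,8]$, where $q(t)=\frac{7}{64}t^2-t$, a single Jensen bound gives only $8e^{-5/3}\approx1.51$, and the total then drops to about $3.70$, below even $1+e$. The paper instead proceeds piece by piece:
\begin{itemize}
\item On $[0,8]$ it bounds the convex $q$ from below by the maximum of its tangent lines at $0$, at the minimum point $\frac{32}{7}$, and at $8$, which gives $1+\frac43e^{-1}+\frac53e^{-16/7}$.
\item It uses Jensen only on the short pieces $[8,10]$ and $[10,12]$, where the averages are $-\frac{83}{108}$ and $-\frac{335}{432}$, both above $-\frac79$.
\item It integrates the tail exactly to get $e^{-17/16}$.
\item It converts everything to rationals via $e^{-x}\ge\big(\frac{8-x}{8+x}\big)^4$, a consequence of \eqref{log-inequality}, which yields exactly $\frac{2267}{600}$.
\end{itemize}
Your subdivision fallback could also work. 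But until one such computation is actually carried out, the inequality $\mathrm{TM}_{\alpha_\beta,\beta}>\frac{2267}{600}\pi$ has not been proved.
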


\begin{proof}
Consider the three-node polygonal profile on $[0,\infty)$ (see Figure~\ref{fig:explicit-profile}) given by
\begin{equation}\label{eq:explicit-profile} 
\phi_\natural(t)=
\begin{cases}
\frac32t,&0\le t\le8,\\
t+4,&8\le t\le10,\\
9+\frac t2,&10\le t\le12,\\
15,&t\ge12.
\end{cases}
\end{equation}
\begin{figure}[htbp]
\centering
\begin{tikzpicture}[x=0.60cm,y=0.34cm,>=stealth,
  every node/.style={font=\small}]
  \draw[->] (0,0) -- (15.5,0) node[right] {$t$};
  \draw[->] (0,0) -- (0,17) node[above] {$\phi_\natural(t)$};
  \node[below left] at (0,0) {$0$};
  \foreach \t in {8,10,12}
    \draw (\t,0.18) -- (\t,-0.18) node[below] {$\t$};
  \foreach \h in {12,14,15}
    \draw (0.10,\h) -- (-0.10,\h) node[left,font=\scriptsize] {$\h$};
  \draw[densely dashed,gray!60] (8,0) -- (8,12) -- (0,12);
  \draw[densely dashed,gray!60] (10,0) -- (10,14) -- (0,14);
  \draw[densely dashed,gray!60] (12,0) -- (12,15) -- (0,15);
  \draw[very thick,blue!60!black]
    (0,0) -- (8,12) -- (10,14) -- (12,15) -- (15,15);
  \fill[blue!60!black] (8,12) circle (1.8pt);
  \fill[blue!60!black] (10,14) circle (1.8pt);
  \fill[blue!60!black] (12,15) circle (1.8pt);
\end{tikzpicture}
\caption{The three-node profile in \eqref{eq:explicit-profile}, with positive nodes $(8,12)$, $(10,14)$, and $(12,15)$ and constant value $15$ for $t\ge12$.}
\label{fig:explicit-profile}
\end{figure}
By \eqref{eq:theta-explicit}, we have
\[
\theta_1(\phi_\natural)=\frac{45}{2}e^{-\frac{19}{210}}.
\]
Since
\[
e^{\frac{19}{210}}
>1+\frac{19}{210}
+\frac12\left(\frac{19}{210}\right)^2
=\frac{35}{32}+\frac{289}{352800}
>\frac{35}{32}
\]
we have
 $$
 \frac{1}{\theta_1(\phi_\natural)}>\frac7{144}.
 $$
From  Proposition~\ref{prop:uniform-profile} it follows that
\begin{equation}\label{eq:simple-profile-bound}
\frac{\mathrm{TM}_{\alpha_\beta,\beta}}{\pi}
\ge\int_0^\infty e^{\frac{\phi_\natural(t)^2}{\theta_1(\phi_\natural)}-t}\dd t
\ge \int_0^\infty e^{q(t)}\dd t,
\;\;\mbox{where}\;\; q(t)=\frac7{144}\phi_\natural(t)^2-t.
\end{equation}
We estimate this integral piece by piece, using tangent-line bounds for the convex function $q$ on $[0,8]$, as in the proof of \cite[Lemma~2.1]{HudsonLeckband02}, and Jensen's inequality on $[8,12]$.

On $[0,8]$ we have $q(t)=\frac7{64}t^2-t$, which is convex, with $q(0)=0$, $q'(0)=-1$, minimum $q(\frac{32}7)=-\frac{16}7$, $q(8)=-1$, and $q'(8)=\frac34$. Since a convex function lies above its tangent lines,
\[
q(t)\ge h(t)=\max\left\{-t,\,-\frac{16}7,\,\frac34t-7\right\},\;\;\mbox{for}\;\; 0\le t\le8.
\]
\begin{figure}[H]
\centering
\begin{tikzpicture}[x=1.30cm,y=0.88cm,>=stealth,
  every node/.style={font=\small},
  qcurve/.style={blue!70!black,very thick},
  envelope/.style={orange!85!black,line width=1.6pt},
  tangent/.style={gray!75,thick}]
  \foreach \x in {2.285714285714,4.571428571429,6.285714285714,8}
    \draw[gray!12] (\x,-4.5) -- (\x,0.4);
  \foreach \y in {-4,-3,-2.285714285714,-1}
    \draw[gray!12] (0,\y) -- (8.05,\y);
  \begin{scope}
    \clip (0,-4.5) rectangle (8.05,0.4);
    \path[fill=orange!10]
      plot[domain=0:8,samples=160] (\x,{7*\x*\x/64-\x})
      -- (8,-1) -- ({44/7},{-16/7}) -- ({16/7},{-16/7}) -- (0,0) -- cycle;
    \draw[tangent,dashed] (0,0) -- (8,-8);
    \draw[tangent,dashdotted] (0,{-16/7}) -- (8,{-16/7});
    \draw[tangent,densely dotted] (0,-7) -- (8,-1);
    \draw[qcurve] plot[domain=0:8,samples=160] (\x,{7*\x*\x/64-\x});
    \draw[envelope] (0,0) -- ({16/7},{-16/7})
      -- ({44/7},{-16/7}) -- (8,-1);
    \draw[orange!65!black,densely dashed] ({16/7},{-16/7}) -- ({16/7},0);
    \draw[orange!65!black,densely dashed] ({44/7},{-16/7}) -- ({44/7},0);
    \draw[blue!50!black,densely dotted] ({32/7},{-16/7}) -- ({32/7},0);
  \end{scope}
  \draw[->,gray!80] (-0.08,0) -- (8.35,0) node[right,black] {$t$};
  \draw[->,gray!80] (0,-4.55) -- (0,0.45);
  \node[above right] at (0,0.45) {$q(t),\ h(t)$};
  \node[below left] at (0,0) {$0$};
  \foreach \x in {2.285714285714,4.571428571429,6.285714285714,8}
    \draw[gray!65] (\x,-0.06) -- (\x,0.06);
  \node[below] at ({16/7},0) {$\frac{16}{7}$};
  \node[below] at ({32/7},0) {$\frac{32}{7}$};
  \node[below] at ({44/7},0) {$\frac{44}{7}$};
  \node[below] at (8,0) {$8$};
  \foreach \y in {-4,-3,-1}
    \draw[gray!65] (-0.06,\y) -- (0.06,\y) node[left,black,xshift=-2pt] {$\y$};
  \draw[gray!65] (-0.06,{-16/7}) -- (0.06,{-16/7});
  \node[left] at (-0.06,{-16/7}) {$-\frac{16}{7}$};
  \fill[blue!70!black] (0,0) circle (2pt);
  \fill[blue!70!black] ({32/7},{-16/7}) circle (2pt);
  \fill[blue!70!black] (8,-1) circle (2pt);
  \fill[orange!85!black] ({16/7},{-16/7}) circle (1.8pt);
  \fill[orange!85!black] ({44/7},{-16/7}) circle (1.8pt);
  \draw[qcurve] (0.05,-5.15) -- (0.48,-5.15);
  \node[right] at (0.58,-5.15) {$q(t)=\frac7{64}t^2-t$};
  \draw[tangent,dashed] (4.40,-5.15) -- (4.83,-5.15);
  \node[right] at (4.93,-5.15) {Tangent at $0$: $-t$};
  \draw[tangent,dashdotted] (0.05,-5.70) -- (0.48,-5.70);
  \node[right] at (0.58,-5.70) {Tangent at the minimum: $-\frac{16}{7}$};
  \draw[tangent,densely dotted] (4.40,-5.70) -- (4.83,-5.70);
  \node[right] at (4.93,-5.70) {Tangent at $8$: $\frac34t-7$};
  \draw[envelope] (0.05,-6.25) -- (0.48,-6.25);
  \node[right] at (0.58,-6.25) {$h(t)=\max\{-t,-\frac{16}{7},\frac34t-7\}$};
\end{tikzpicture}
\caption{The parabola $q$ (blue), its three tangent lines (gray), and their maximum $h$ (orange). The maximum changes branches at $\frac{16}{7}$ and $\frac{44}{7}$.}
\label{fig:q-tangents}
\end{figure}
\noindent The maximum of these three affine functions changes branches at \(t=\frac{16}{7}\) and \(t=\frac{44}{7}\), so
\[
\int_0^8e^{q(t)}\dd t
\ge\int_0^{\frac{16}7}e^{-t}\dd t+4e^{-\frac{16}7}+\int_{\frac{44}7}^8e^{\frac34t-7}\dd t
=1+\frac43e^{-1}+\frac53e^{-\frac{16}7}.
\]
On $[8,12]$ we use the standard Jensen inequality
\begin{equation}\label{Jensen}
  \int_a^be^{q(t)}\dd t\ge(b-a)e^{\frac1{b-a}\int_a^bq(t)\dd t}.
\end{equation}
We have 
\[
\frac12\int_8^{10}q(t)\dd t=\frac7{144}\cdot\frac{14^3-12^3}{6}-9=-\frac{83}{108},
\]
and
\[
\frac12\int_{10}^{12}q(t)\dd t=\frac7{144}\cdot\frac{15^3-14^3}{3}-11=-\frac{335}{432},
\]
and both are larger than $-\frac79$. Hence, from \eqref{Jensen}
\begin{equation}\nonumber
\begin{aligned}
\int_8^{12}e^{q(t)}\dd t& =\int_8^{10}e^{q(t)}\dd t+\int_{10}^{12}e^{q(t)}\dd t\\
&\ge 2e^{\frac{1}{2}\int_{8}^{10}q(t)\dd t}+2e^{\frac{1}{2}\int_{10}^{12}q(t)\dd t}\\
&\ge 4e^{-\frac79}.
\end{aligned}
\end{equation}
Finally, $q(t)=\frac{7}{144}\cdot 225-t=\frac{175}{16}-t$ on $[12,\infty)$, so $\int_{12}^\infty e^{q(t)}\dd t=e^{-\frac{17}{16}}$. Consequently,
\[
\int_0^\infty e^{q(t)}\dd t
\ge1+\frac43e^{-1}+\frac53e^{-\frac{16}{7}}+4e^{-\frac79}+e^{-\frac{17}{16}}.
\]
Now,  we observe that, for $0\le z<1$,
\begin{equation}\label{log-inequality}
  \log\frac{1+z}{1-z}
=2\int_0^z\frac{1}{1-s^2}\dd s\ge 2z.
\end{equation}
Taking $z={x}/{8}$ yields 
\begin{equation}\nonumber
  e^{-x}\ge \left(\frac{8-x}{8+x}\right)^4,
\;\;\mbox{for}\;\; 0<x<8.
\end{equation}
In particular,
\[
e^{-1}\ge \left(\frac79\right)^4>\frac9{25}\;\;\mbox{and}\;\;
e^{-\frac{16}{7}}\ge \left(\frac59\right)^4>\frac{19}{200},
\]
and also
\[
e^{-\frac79}\ge \left(\frac{65}{79}\right)^4>\frac9{20}\;\;\mbox{and}\;\;
e^{-\frac{17}{16}}\ge \left(\frac{111}{145}\right)^4>\frac{17}{50}.
\]
It follows that
\[
\int_0^\infty e^{q(t)}\dd t
>1+\frac43\cdot\frac9{25}+\frac53\cdot\frac{19}{200}+4\cdot\frac9{20}+\frac{17}{50}
=\frac{2267}{600}>1+e,
\]
since we have $e<\frac{11}{4}<\frac{1667}{600}$. Together with \eqref{eq:simple-profile-bound}, this completes the proof.
\end{proof}
\begin{proof}[Proof of Theorem~\ref{thm:attainment}]
  First, as pointed out in \cite{PaivaOliveira26}, from  the results of B.~Opic and A.~Kufner \cite[Theorems~7.4 and~7.13]{Opic}, we have the compact embeddings
\begin{equation}\label{emb-compact}
  H^{1}_{0,\operatorname{rad}}(B, w_{\beta})\hookrightarrow L^{q}(B),\;\; \mbox{for}\;\; 1<q<\infty.
\end{equation}
Fix $\beta\in[0,1)$ and let $(u_k)\subset H^{1}_{0,\mathrm{rad}}(B, w_{\beta})$ be a maximizing sequence for $\mathrm{TM}_{\alpha_{\beta},\beta}$. Since $t\mapsto J_{\alpha_{\beta},\beta}(tu_k)$,  $t\ge 0$ is a nondecreasing function, we can assume $\|u_k\|_{w_{\beta}}=1$. In addition, up to a subsequence, we can assume that $u_k\weakto u$ in $H^1_{0,\mathrm{rad}}(B,w_\beta)$ and $u_k(x)\to u(x)$ almost everywhere on $B$. By the weak lower semicontinuity of the norm we also have $\|u\|_{w_\beta}\le1$. Proposition~\ref{prop:explicit} and \eqref{concentration-B} give
\begin{equation}\label{TM>frente}
  \mathrm{TM}_{\alpha_\beta,\beta}>\pi(1+e)\ge\Sigma_\beta.
\end{equation}
If $u=0$, the alternative in \cite[Lemma~5]{Roy16} gives either $J_{\alpha_\beta,\beta}(u_k)\to\pi$ or $(u_k)$ is a normalized concentrating sequence. Both possibilities contradict the strict bound \eqref{TM>frente}. Thus $u\ne0$. By \cite[Lemma~2.2]{Nguyen19}, there exists $p>1$ such that 

\begin{equation}
\limsup_k\int_{B}e^{p\alpha_\beta|u_k|^{\frac{2}{1-\beta}}}\dd x<\infty.
\end{equation} 
 So, Vitali's theorem yields
$
J_{\alpha_\beta,\beta}(u)=\mathrm{TM}_{\alpha_\beta,\beta}.
$
Thus, $u$  is the desired maximizer.
\end{proof}

\begin{remark}\label{extremal-normalized}
  The maximizer $u$ ensured by Theorem~\ref{thm:attainment} satisfies $\|u\|_{w_{\beta}}=1$. Indeed, if $0<\|u\|_{w_\beta}<1$, we must have 
$
J_{\alpha_\beta,\beta}\big({u}/{\|u\|_{w_\beta}}\big)
>J_{\alpha_\beta,\beta}(u)=\mathrm{TM}_{\alpha_\beta,\beta},
$
which is impossible. 
\end{remark}
%======================================================================
\section{Local maximizers beyond the critical threshold}\label{sec:supercritical}
%======================================================================

In this section, we prove Theorem~\ref{thm:supercritical} by extending \cite[Lemmas~3--5]{PaivaOliveira26} to the full range $0\le\beta<1$. In \cite{PaivaOliveira26}, these lemmas are stated for small $\beta$, and this restriction is used only through two facts: the strict bound $\mathrm{TM}_{\alpha_\beta,\beta}>\pi(1+e)$ \cite[(3.1)]{PaivaOliveira26} and the attainment of $\mathrm{TM}_{\alpha_\beta,\beta}$, which gives \cite[(3.2)]{PaivaOliveira26}. Both facts now hold for every $\beta\in[0,1)$, by Proposition~\ref{prop:explicit} and Theorem~\ref{thm:attainment}. The remaining ingredients, namely \cite[Lemma~1]{PaivaOliveira26} ($J_{\alpha,\beta}$ is of class $C^1$ and $J'_{\alpha,\beta}$ is locally bounded) and \cite[Lemma~2]{PaivaOliveira26} (weak solutions of the Euler--Lagrange equation, in particular the elements of $K_{\alpha_\beta,\beta}$ below, are bounded), are stated there for every $\beta\in[0,1)$. Hence, the strict bound $\mathrm{TM}_{\alpha_\beta,\beta}>\pi(1+e)\ge\Sigma_\beta$ is the only new ingredient, and it enters through the following compactness property.

\begin{lemma}\label{lem:extension3}
Let $\beta\in[0,1)$. Every sequence $(u_k)\subset\mathcal S_\beta$ such that $J_{\alpha_\beta,\beta}(u_k)\to\mathrm{TM}_{\alpha_\beta,\beta}$ has a subsequence that converges in $H^1_{0,\mathrm{rad}}(B,w_\beta)$ to some element of
\[
K_{\alpha_{\beta},\beta}=
\{u\in\mathcal S_\beta:
J_{\alpha_\beta,\beta}(u)=\mathrm{TM}_{\alpha_\beta,\beta}\}.
\]
In particular, $K_{\alpha_{\beta},\beta}$ is nonempty and compact in $H^1_{0,\mathrm{rad}}(B,w_\beta)$.
\end{lemma}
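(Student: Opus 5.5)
The argument repeats the proof of Theorem~\ref{thm:attainment} along a subsequence, and then upgrades weak convergence to strong convergence using the uniform convexity of the Hilbert norm.

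Let $(u_k)\subset\mathcal S_\beta$ satisfy $J_{\alpha_\beta,\beta}(u_k)\to\mathrm{TM}_{\alpha_\beta,\beta}$. Since $H^1_{0,\mathrm{rad}}(B,w_\beta)$ is a Hilbert space and $(u_k)$ is bounded, we extract a subsequence with $u_k\weakto u$ weakly and $u_k\to u$ a.e. in $B$, using the compact embedding \eqref{emb-compact}. Weak lower semicontinuity gives $\|u\|_{w_\beta}\le1$.

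\textbf{Step 1: ruling out $u=0$.} Suppose $u=0$. By \cite[Lemma~5]{Roy16}, either $J_{\alpha_\beta,\beta}(u_k)\to\pi$, or $(u_k)$ is an NCS. In the second case $\limsup_k J_{\alpha_\beta,\beta}(u_k)\le\Sigma_\beta\le\pi(1+e)$ by \eqref{concentration-B}. Both alternatives contradict the strict inequality \eqref{TM>frente}, which rests on Proposition~\ref{prop:explicit}. Hence $u\ne0$.

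\textbf{Step 2: $u$ is a maximizer.} Since $u\ne0$, \cite[Lemma~2.2]{Nguyen19} gives some $p>1$ with $\sup_k\int_B e^{p\alpha_\beta|u_k|^{2/(1-\beta)}}\dd x<\infty$. This makes $e^{\alpha_\beta|u_k|^{2/(1-\beta)}}$ uniformly integrable. Vitali's theorem, together with the a.e. convergence, then yields $J_{\alpha_\beta,\beta}(u_k)\to J_{\alpha_\beta,\beta}(u)$, so $J_{\alpha_\beta,\beta}(u)=\mathrm{TM}_{\alpha_\beta,\beta}$.

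\textbf{Step 3: $\|u\|_{w_\beta}=1$.} Remark~\ref{extremal-normalized} shows this. If $0<\|u\|_{w_\beta}<1$, then $u/\|u\|_{w_\beta}$ is admissible, and strict monotonicity of $t\mapsto J_{\alpha_\beta,\beta}(tu)$ for $u\ne0$ would give a value strictly above the supremum, which is impossible. Hence $u\in K_{\alpha_\beta,\beta}$.

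\textbf{Step 4: strong convergence.} We have $u_k\weakto u$ and $\|u_k\|_{w_\beta}=1=\|u\|_{w_\beta}$. Expanding
\[
\|u_k-u\|_{w_\beta}^2=\|u_k\|_{w_\beta}^2-2\langle u_k,u\rangle_{w_\beta}+\|u\|_{w_\beta}^2\to 1-2+1=0
\]
gives $u_k\to u$ in $H^1_{0,\mathrm{rad}}(B,w_\beta)$, which proves the main claim.

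\textbf{Consequences.} Nonemptiness of $K_{\alpha_\beta,\beta}$ follows by applying the claim to any maximizing sequence normalized to lie in $\mathcal S_\beta$; such a sequence exists by the monotonicity in $t$ noted in the proof of Theorem~\ref{thm:attainment}. For compactness, take any sequence in $K_{\alpha_\beta,\beta}$. It is trivially maximizing, so by the claim it has a subsequence converging in norm to an element of $K_{\alpha_\beta,\beta}$. Thus $K_{\alpha_\beta,\beta}$ is sequentially compact, hence compact in the metric space $H^1_{0,\mathrm{rad}}(B,w_\beta)$.

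\textbf{Main obstacle.} The real work is Step 1, excluding concentration. It is handled entirely by the strict gap $\mathrm{TM}_{\alpha_\beta,\beta}>\pi(1+e)\ge\Sigma_\beta$ from Proposition~\ref{prop:explicit}. The one point to check is that the hypotheses of \cite[Lemma~5]{Roy16} and \cite[Lemma~2.2]{Nguyen19} hold for the full range $0\le\beta<1$, not only for small $\beta$.
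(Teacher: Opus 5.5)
Your proposal is correct and follows essentially the same route as the paper: you rerun the proof of Theorem~\ref{thm:attainment} (rule out $u=0$ using the gap $\mathrm{TM}_{\alpha_\beta,\beta}>\pi(1+e)\ge\Sigma_\beta$, then apply Vitali), normalize via Remark~\ref{extremal-normalized}, and upgrade weak to strong convergence from $\|u_k\|_{w_\beta}=\|u\|_{w_\beta}=1$. You also obtain nonemptiness and compactness of $K_{\alpha_\beta,\beta}$ exactly as the paper does.
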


\begin{proof}
Up to a subsequence, we can assume that $u_k\weakto u$ in $H^1_{0,\mathrm{rad}}(B,w_\beta)$. Since $\mathrm{TM}_{\alpha_\beta,\beta}>\pi(1+e)\ge\Sigma_\beta$ by Proposition~\ref{prop:explicit} and \eqref{concentration-B}, the argument in the proof of Theorem~\ref{thm:attainment} gives $u\ne0$ and $J_{\alpha_\beta,\beta}(u)=\mathrm{TM}_{\alpha_\beta,\beta}$, and Remark~\ref{extremal-normalized} gives $\|u\|_{w_\beta}=1$. Thus $u\in K_{\alpha_{\beta},\beta}$ and, since $\|u_k\|_{w_\beta}=1$ for all $k$, the weak convergence implies that $u_k\to u$ strongly in $H^1_{0,\mathrm{rad}}(B,w_\beta)$. Finally, $K_{\alpha_{\beta},\beta}\ne\varnothing$ by Theorem~\ref{thm:attainment} and Remark~\ref{extremal-normalized}, and $K_{\alpha_{\beta},\beta}$ is compact because every sequence in $K_{\alpha_{\beta},\beta}$ is a maximizing sequence.
\end{proof}
\noindent For each $\eps>0$, let
\[
N_\eps=\{u\in\mathcal S_\beta\, :\,
\dist\,(u,K_{\alpha_{\beta},\beta})<\eps\},\;\; \mbox{where}\;\; \dist\,(u,K_{\alpha_{\beta},\beta})
=\inf_{v\in K_{\alpha_{\beta},\beta}}\|u-v\|_{w_\beta}.
\]
Since $K_{\alpha_{\beta},\beta}$ is compact, $(N_\eps)_{\eps>0}$ is a neighborhood basis for $K_{\alpha_{\beta},\beta}$ in $\mathcal S_\beta$. Moreover, $K_{\alpha_{\beta},\beta}\subset N_\eps$ and Theorem~\ref{thm:attainment} give
\begin{equation}\label{eq:critical-neighborhood-level}
\mathrm{TM}_{\alpha_\beta,\beta}
=\sup_{u\in N_\eps}J_{\alpha_\beta,\beta}(u),
\;\;\mbox{for every}\;\;\eps>0.
\end{equation}
With Lemma~\ref{lem:extension3} and \eqref{eq:critical-neighborhood-level} in place of \cite[Lemma~3 and (3.2)]{PaivaOliveira26}, and with Proposition~\ref{prop:explicit} in place of \cite[(3.1)]{PaivaOliveira26}, the proofs of \cite[Lemmas~4 and~5]{PaivaOliveira26} apply without change and give the following result.

\begin{lemma}\label{lem:extension5}
For every $\beta\in[0,1)$, there exist $\eps>0$ and $\alpha^\sharp>\alpha_\beta$ such that, for every $\alpha\in[\alpha_\beta,\alpha^\sharp)$,
\begin{equation}\label{eq:perturbed-gap}
\sup_{u\in N_{2\eps}\setminus N_\eps}J_{\alpha,\beta}(u)
<A_{\alpha,\beta}^{\sharp}:=\sup_{u\in N_\eps}J_{\alpha,\beta}(u)<\infty.
\end{equation}
Moreover, $A_{\alpha,\beta}^\sharp$ is attained in $N_\eps$, and the set
\[
K_{\alpha,\beta}=\{u\in N_\eps:J_{\alpha,\beta}(u)=A_{\alpha,\beta}^\sharp\}
\]
is compact in $H^1_{0,\mathrm{rad}}(B,w_\beta)$.
\end{lemma}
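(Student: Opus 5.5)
We follow Struwe's perturbation argument as adapted in \cite[Lemmas~4 and~5]{PaivaOliveira26}. The argument has three steps: a strict gap at the critical exponent $\alpha_\beta$, continuity of that gap in $\alpha$, and attainment inside $N_\eps$ for $\alpha$ slightly above $\alpha_\beta$.

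\emph{Step 1 (gap at $\alpha=\alpha_\beta$).} We claim there is $\eps>0$ such that
\[
\sup_{N_{3\eps}\setminus N_{\eps/2}}J_{\alpha_\beta,\beta}<\mathrm{TM}_{\alpha_\beta,\beta}.
\]
Otherwise some sequence $(u_k)\subset\mathcal S_\beta$ with $\dist(u_k,K_{\alpha_\beta,\beta})\ge\eps/2$ is maximizing. By Lemma~\ref{lem:extension3}, a subsequence converges strongly to a point of $K_{\alpha_\beta,\beta}$, which is a contradiction. This step uses exactly the strict bound of Proposition~\ref{prop:explicit} through Lemma~\ref{lem:extension3}.

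\emph{Step 2 (finiteness and continuity in $\alpha$).} We show that, for $\alpha$ slightly above $\alpha_\beta$, the sup of $J_{\alpha,\beta}$ over $N_{3\eps}$ is finite and close to the critical one, uniformly. Write $u=v+w$ with $v\in K_{\alpha_\beta,\beta}$ and $\|w\|_{w_\beta}<3\eps$. By \cite[Lemma~2]{PaivaOliveira26}, $v$ is bounded, and compactness of $K_{\alpha_\beta,\beta}$ gives a uniform bound $\|v\|_\infty\le M$. Using $|u|^{2/(1-\beta)}\le(1+\delta)|w|^{2/(1-\beta)}+C_\delta M^{2/(1-\beta)}$ and the Calanchi--Ruf inequality \eqref{CR-Moser-sup} applied to $w/\|w\|_{w_\beta}$, we obtain $\sup_{N_{3\eps}}\int_B e^{p\alpha|u|^{2/(1-\beta)}}\dd x<\infty$ for some $p>1$, uniformly for $\alpha\le\alpha_\beta+1$, provided $\eps$ is small. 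Hence
\[
|J_{\alpha,\beta}(u)-J_{\alpha_\beta,\beta}(u)|\le C(\alpha-\alpha_\beta)
\]
uniformly on $N_{3\eps}$, via the mean value theorem and Hölder's inequality with the compact embedding \eqref{emb-compact}. Combined with Step 1, this yields $\alpha^\sharp>\alpha_\beta$ such that \eqref{eq:perturbed-gap} holds for $\alpha\in[\alpha_\beta,\alpha^\sharp)$. The lower bound $A^\sharp_{\alpha,\beta}\ge\mathrm{TM}_{\alpha_\beta,\beta}$ comes from \eqref{eq:critical-neighborhood-level} and the monotonicity of $J_{\cdot,\beta}$ in $\alpha$.

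\emph{Step 3 (attainment and compactness).} Take a maximizing sequence $(u_k)\subset N_\eps$ for $A^\sharp_{\alpha,\beta}$. By the gap \eqref{eq:perturbed-gap} and Ekeland's principle on the $C^1$ manifold $\mathcal S_\beta$ (using \cite[Lemma~1]{PaivaOliveira26}), we may assume $(u_k)$ is a Palais--Smale sequence staying in $N_{3\eps/2}$. Up to a subsequence, $u_k\weakto u$. The uniform $L^p$ bound of Step 2 with $p>1$ and Vitali's theorem give $J_{\alpha,\beta}(u_k)\to J_{\alpha,\beta}(u)$, because $u_k\to u$ a.e. by \eqref{emb-compact}. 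Next we need $\|u\|_{w_\beta}=1$, i.e.\ strong convergence. If $\|u\|_{w_\beta}<1$ while $u\ne0$, rescaling $u/\|u\|_{w_\beta}$ would increase $J$. The obstacle is to keep this rescaled function inside $N_\eps$; the Palais--Smale condition handles it, since the Euler--Lagrange equation tested against $u_k-u$ forces $\|u_k\|^2_{w_\beta}-\|u\|^2_{w_\beta}\to0$. The case $u=0$ is excluded because $N_{3\eps}$ stays away from $0$ once $\eps<1$, and weak limits of elements $v_k+w_k$ have $\|u-v\|\le3\eps$ for a limit $v$ of $v_k$ in the compact set $K_{\alpha_\beta,\beta}$. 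The same argument shows that every maximizing sequence has a convergent subsequence, which gives compactness of $K_{\alpha,\beta}$. Finally, the gap shows $u$ lies in the open set $N_\eps$, so it is a local maximizer.

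I expect Step 3 to be the main obstacle: strong convergence of maximizing sequences for $\alpha>\alpha_\beta$ and ruling out loss of norm. Step 2 is routine.
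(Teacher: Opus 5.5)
Your three-step scheme (gap at $\alpha_\beta$ from Lemma~\ref{lem:extension3}, continuity in $\alpha$ near $K_{\alpha_\beta,\beta}$, attainment by weak limits) is the Struwe argument that the paper imports from \cite[Lemmas~4 and~5]{PaivaOliveira26}, so the route is the same. However, one step of Step~2 is not justified as written. Compactness of $K_{\alpha_\beta,\beta}$ in $H^1_{0,\mathrm{rad}}(B,w_\beta)$, together with boundedness of each of its elements, does not give $\sup_{v\in K_{\alpha_\beta,\beta}}\|v\|_\infty<\infty$. The reason is that the weighted norm does not control the sup norm. For instance, if $z\ge0$ is an unbounded element of the space, the functions $\min\{z,k\}/\sqrt{k}$, $k\in\mathbb N$, together with $0$, form a compact set of bounded functions whose sup norms tend to infinity.

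The repair is short. Choose a finite $\eps$-net $v^1,\dots,v^m$ of $K_{\alpha_\beta,\beta}$ and put $M=\max_i\|v^i\|_\infty$, which is finite by \cite[Lemma~2]{PaivaOliveira26}. Then write each $u\in N_{3\eps}$ as $u=v^i+w$ with $\|w\|_{w_\beta}<4\eps$. With this, your Calanchi--Ruf estimate and the mean value bound go through unchanged for $\eps$ small. A uniform bound on $K_{\alpha_\beta,\beta}$ could also be extracted from the Euler--Lagrange equation, but that requires an argument, not just compactness.

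In Step~3, your Ekeland/Palais--Smale argument is correct: the multiplier $\tfrac12J_{\alpha,\beta}'(u_k)[u_k]$ tends to $\tfrac12J_{\alpha,\beta}'(u)[u]>0$ because $u\ne0$. But it is a detour, since the obstacle you identify is exactly what the gap removes. Take a plain maximizing sequence $(u_k)\subset N_\eps$ with $u_k\weakto u$. Your argument excluding $u=0$ gives $\|u-v\|_{w_\beta}\le\eps$ for some $v\in K_{\alpha_\beta,\beta}$. Hence
$\|u/\|u\|_{w_\beta}-v\|_{w_\beta}\le(1-\|u\|_{w_\beta})+\|u-v\|_{w_\beta}\le2\|u-v\|_{w_\beta}\le2\eps<3\eps$.
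Your Steps~1 and~2 yield $\sup_{N_{3\eps}\setminus N_\eps}J_{\alpha,\beta}<A^\sharp_{\alpha,\beta}$, so $J_{\alpha,\beta}\le A^\sharp_{\alpha,\beta}$ on all of $N_{3\eps}$. On the other hand, $\|u\|_{w_\beta}<1$ would give $J_{\alpha,\beta}(u/\|u\|_{w_\beta})>J_{\alpha,\beta}(u)=A^\sharp_{\alpha,\beta}$, a contradiction. Thus $\|u\|_{w_\beta}=1$ and $u_k\to u$ strongly. The gap then places $u$ in $N_\eps$, and applying the same reasoning to sequences in $K_{\alpha,\beta}$ gives its compactness. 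This direct route uses only the gap and Vitali's theorem, whereas yours also needs Ekeland's principle on $\mathcal S_\beta$ and the $C^1$ regularity from \cite[Lemma~1]{PaivaOliveira26}.
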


\begin{proof}[Proof of Theorem~\ref{thm:supercritical}]
In the subcritical case $0\le \alpha<\alpha_\beta$, the attainability follows from compact embedding \eqref{emb-compact} and the Vitali convergence theorem, and, as in Remark~\ref{extremal-normalized}, the maximizer belongs to $\mathcal S_\beta$ (for $\alpha=0$, $J_{0,\beta}$ is constant). Theorem~\ref{thm:attainment} ensures a maximizer in the critical case $\alpha=\alpha_\beta$, which belongs to $\mathcal S_\beta$ by Remark~\ref{extremal-normalized}. Finally, for $\alpha\in(\alpha_\beta,\alpha^\sharp)$, Lemma~\ref{lem:extension5} gives a maximizer in $N_\eps$. Since $N_\eps$ is open in $\mathcal S_\beta$, this is a local maximizer on $\mathcal S_\beta$. 
\end{proof}

%======================================================================
\section{A lower bound for the classical supremum}\label{sec:rational-classical}
%======================================================================

In this section, we first give a profile estimate for every $\beta\in[0,1)$ and then apply it with $\beta=0$ to estimate the classical Trudinger--Moser supremum.
Let  $\phi$ be an $n$-node polygonal profile, with $\phi(0)=0$, nodes $t_j>0$ and heights $y_j=\phi(t_j)$, $j=1, \dots, n$. Then, we set 
\begin{equation}\label{eq:jensen-profile-mean}
\mu_j=
\frac{y_{j-1}^2+y_{j-1}y_j+y_j^2}{3\theta_{\beta}(\phi)}
-\frac{t_{j-1}+t_j}{2},
\end{equation}
with $\theta_{\beta}(\phi)$ defined in \eqref{eq:theta}. With this notation, we have the following:
\begin{lemma}\label{lem:jensen-rational}
Let $\beta\in[0,1)$ and let $\phi$ be an $n$-node polygonal profile, and let $\mu_j$ be as in \eqref{eq:jensen-profile-mean}.
Then
\begin{equation}\label{eq:jensen-profile-bound}
\frac{\mathrm{TM}_{\alpha_{\beta},\beta}}{\pi}\ge \int_0^\infty e^{\frac{\phi(t)^2}{\theta_{\beta}(\phi)}-t}\dd t
\ge\sum_{j=1}^n (t_j-t_{j-1})e^{\mu_j}+e^{\frac{y_n^2}{\theta_{\beta}(\phi)}-t_n}.
\end{equation}
Moreover, $-t_n\le\mu_j\le0$ for $1\le j\le n$, and $-t_n\le y_n^2/\theta_{\beta}(\phi)-t_n\le0$.
\end{lemma}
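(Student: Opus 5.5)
The first inequality in \eqref{eq:jensen-profile-bound} is exactly Lemma~\ref{lem:test2}. For the second, I will split $[0,\infty)$ into the intervals $[t_{j-1},t_j]$, $1\le j\le n$, together with the tail $[t_n,\infty)$. On the tail $\phi\equiv y_n$, so the integral there equals $e^{y_n^2/\theta_\beta(\phi)}\int_{t_n}^\infty e^{-t}\dd t=e^{y_n^2/\theta_\beta(\phi)-t_n}$. On each $[t_{j-1},t_j]$ I apply Jensen's inequality \eqref{Jensen} to $q(t)=\phi(t)^2/\theta_\beta(\phi)-t$, which gives
\[
\int_{t_{j-1}}^{t_j} e^{q}\dd t\ge (t_j-t_{j-1})\,e^{\frac{1}{t_j-t_{j-1}}\int_{t_{j-1}}^{t_j}q\dd t}.
\]
It then remains to check that this mean value is $\mu_j$. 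Since $\phi$ is affine on the interval, the average of $\phi^2$ there is $(y_{j-1}^2+y_{j-1}y_j+y_j^2)/3$. This follows from $\int_0^1(a+(b-a)s)^2\dd s=(a^2+ab+b^2)/3$. The average of $t$ is $(t_{j-1}+t_j)/2$. Summing the interval bounds and the tail term yields \eqref{eq:jensen-profile-bound}.

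For the bounds on $\mu_j$ and on the tail exponent, the key fact is the pointwise inequality
\begin{equation}\label{eq:plan-key}
\phi(t)^2\le \theta_\beta(\phi)\,t \qquad\text{for all } t\ge0.
\end{equation}
Once this holds, $q\le0$ everywhere, so every interval mean satisfies $\mu_j\le0$, and likewise $y_n^2/\theta_\beta(\phi)-t_n\le0$. The lower bounds are immediate: $\phi^2/\theta_\beta\ge0$ and $t\le t_n$ on $[0,t_n]$ give $\mu_j\ge-(t_{j-1}+t_j)/2\ge-t_n$ and $y_n^2/\theta_\beta-t_n\ge-t_n$.

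To prove \eqref{eq:plan-key}, I would use the one-dimensional Sobolev/Hölder estimate behind the substitution in Lemma~\ref{lem:test}. Put $\eta(s)=[\phi(s^{1/(1-\beta)})]^{1-\beta}$. Lemma~\ref{lem:test} shows $\int_0^\infty\eta'(s)^2\dd s=\|v_{\beta,\phi}\|^2_{w_\beta}=\theta_\beta(\phi)^{1-\beta}$ (from the computation in \eqref{change-norm}). Since $\eta(0)=0$, Cauchy--Schwarz gives $\eta(s)^2\le s\int_0^s\eta'^2\le s\,\theta_\beta^{1-\beta}$. Setting $s=t^{1-\beta}$, this reads $\phi(t)^{2(1-\beta)}\le (t\,\theta_\beta)^{1-\beta}$, which is \eqref{eq:plan-key}.

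The main point to verify carefully is the identification $\int\eta'^2=\|v_{\beta,\phi}\|^2_{w_\beta}$ under the paper's normalization, i.e.\ the constant $\alpha_\beta^{1-\beta}=4\pi(1-\beta)2^{-\beta}$ as used in Lemma~\ref{lem:test}. The remaining steps are routine.
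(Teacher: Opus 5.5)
Your proof is correct and follows the same argument as the paper. It uses Jensen on each affine piece with the mean of $\phi^2$ equal to $(y_{j-1}^2+y_{j-1}y_j+y_j^2)/3$, the exact tail term, and the pointwise bound $\phi(t)^2\le\theta_\beta(\phi)\,t$ to get the sign bounds on $\mu_j$ and on $y_n^2/\theta_\beta(\phi)-t_n$. The only cosmetic difference is that you apply Cauchy--Schwarz to $\eta$ in the variable $s=t^{1-\beta}$, whereas the paper applies the equivalent weighted H\"older inequality directly to $\phi(t)^{1-\beta}=(1-\beta)\int_0^t\phi'\phi^{-\beta}\dd s$ in the variable $t$.
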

\begin{proof}
Fix $j\in\{1,\dots,n\}$. Since $\phi$ is affine on $[t_{j-1},t_j]$ with endpoint values $y_{j-1}$ and $y_j$, we have 
$$
\phi(t)=y_{j-1}+\frac{y_j-y_{j-1}}{t_j-t_{j-1}}(t-t_{j-1}), \;\;\mbox{for all}\;\; t\in [t_{j-1}, t_j]. 
$$
Hence, the change of variables $t=t_{j-1}+s(t_j-t_{j-1})$, $s\in[0,1]$, gives
\[
\begin{aligned}
\frac{1}{t_j-t_{j-1}}\int_{t_{j-1}}^{t_j}\phi(t)^2\dd t
&=\int_0^1\big((1-s)y_{j-1}+sy_j\big)^2\dd s
=\frac{y_{j-1}^2+y_{j-1}y_j+y_j^2}{3}.
\end{aligned}
\]
Let $q(t)=\phi(t)^2/\theta_{\beta}(\phi)-t$. Thus 
\begin{equation}\label{mu-media}
 \begin{aligned}
 \frac{1}{t_j-t_{j-1}}\int_{t_{j-1}}^{t_j}q(t)\dd t=\frac{y_{j-1}^2+y_{j-1}y_j+y_j^2}{3\theta_{\beta}(\phi)}-\frac{t_j+t_{j-1}}{2}=\mu_j.
\end{aligned} 
\end{equation}
Thus, Jensen's inequality \eqref{Jensen} gives
\begin{equation}\label{sum-part}
\int_{t_{j-1}}^{t_j}e^{q(t)}\dd t\ge (t_j-t_{j-1})e^{\mu_j}.
\end{equation}
Finally, on $[t_n, \infty)$ we have $\phi=y_n$, then 
\begin{equation}\label{tail-part}
  \int_{t_n}^\infty e^{q(t)}\dd t=e^{\frac{y_n^2}{\theta_{\beta}(\phi)}-t_n}.
\end{equation}
Now, by combining \eqref{sum-part} with \eqref{tail-part} and using \eqref{eq:direct-lower},  we obtain \eqref{eq:jensen-profile-bound}.  By H\"{o}lder's inequality and \eqref{change-norm}, for $t>0$,
\begin{equation}
\begin{aligned}
\phi(t)^{1-\beta}
&=(1-\beta)\int_0^t\phi'(s)\phi(s)^{-\beta}\dd s\\
&\le\left((1-\beta)\int_0^t\phi'(s)^2
\left(\frac{s}{\phi(s)^2}\right)^\beta\dd s\right)^{\frac{1}{2}}
t^{\frac{1-\beta}{2}}\\
&\le{\theta_{\beta}(\phi)}^{\frac{1-\beta}{2}}t^{\frac{1-\beta}{2}}.
\end{aligned}
\end{equation}
Thus $\phi(t)^2\le\theta_{\beta}(\phi) t$. Hence $-t_n\le q(t)\le0$ on $[0,t_n]$. Thus, by integrating on $[t_{j-1}, t_j]$, the identity \eqref{mu-media} yields $-t_n\le \mu_j\le 0$ and also $-t_n\le q(t_n)=y_n^2/\theta_{\beta}(\phi)-t_n\le 0$.
\end{proof}

\begin{remark}\label{rem:jensen-uniform}
Letting $\beta\rightarrow 1^{-}$ in $\phi(t)^2\le\theta_{\beta}(\phi)t$ and using Lemma~\ref{lem:limit}, we obtain $\phi(t)^2\le\theta_1(\phi)t$ for $t\ge0$. Hence, with Proposition~\ref{prop:uniform-profile} in place of \eqref{eq:direct-lower}, the proof of Lemma~\ref{lem:jensen-rational} shows that its conclusions remain valid with $\theta_1(\phi)$ in place of $\theta_{\beta}(\phi)$ in \eqref{eq:jensen-profile-mean} and \eqref{eq:jensen-profile-bound}, for every $\beta\in[0,1)$. In particular, this gives lower bounds for $\mathrm{TM}_{\alpha_\beta,\beta}$ that are uniform in $\beta\in[0,1)$.
\end{remark}

\begin{proof}[Proof of Theorem~\ref{thm:classical-bounds}]
By Schwarz symmetrization, we have
$\mathrm{TM}_{4\pi,0}=\mathrm{TM}_{4\pi}$. Let $\phi$ be an $n$-node polygonal profile  with nodes $t_j=hj$ with $h>0$, for $j=1,\cdots, n$ and heights $y_j$, and set
$y=(y_0,\ldots,y_n)$. From \eqref{change-norm} and \eqref{eq:theta} with $\beta=0$, we have 
\[
  \theta_0(\phi)=\int_{0}^{\infty}|\phi^{\prime}(t)|^2\dd t=\frac{1}{h}\sum_{j=1}^n(y_j-y_{j-1})^2.
\]
In addition, the inequality \eqref{eq:jensen-profile-bound} with $\beta=0$ gives
\begin{equation}\label{eq:classical-jensen-bound}
\mathrm{TM}_{4\pi}\ge\pi J_n(h,y),
\end{equation}
where
\begin{equation}\label{eq:classical-jensen-functional}
\begin{aligned}
J_n(h,y)=&h\sum_{j=1}^n
\exp \left[h\left(
\frac{y_{j-1}^2+y_{j-1}y_j+y_j^2}{3E(y)}
-j+\frac{1}{2}
\right)\right]+\exp\left[h\left(\frac{y_n^2}{E(y)}-n\right)\right]
\end{aligned}
\end{equation}
in which  
\[
E(y)=h\theta_0(\phi)
=\sum_{j=1}^n(y_j-y_{j-1})^2.
\]
Note that $E(cy)=c^2E(y)$ and therefore $J_n(h,cy)=J_n(h,y)$ for every
$c>0$. We may thus impose $y_n=1$. Together with $y_0=0$, this leaves the
$n$ parameters $h,y_1,\ldots,y_{n-1}$. To prove the claimed lower bound,
it suffices to give one admissible choice of these parameters. Take $n=30$, $h=13/30$ and $y_j=Y_j/3000$, where $Y_0=0$ and the integers $Y_j$ are listed in Table~\ref{tab:thirty-node-heights} in Appendix~\ref{app:thirty-node-data}.
These heights are strictly increasing and $y_{30}=1$. The corresponding
profile is constant after $t_{30}=13$, and
\begin{equation}\label{eq:thirty-node-energy}
\sum_{j=1}^{30}(Y_j-Y_{j-1})^2=369200,\;\;
 E(y)=\frac{923}{22500}\;\;\mbox{and}\;\;
\theta_0(\phi)=\frac{71}{750}.
\end{equation}
Substituting into \eqref{eq:classical-jensen-functional}, set
\begin{equation}\label{eq:classical-exponent-values}
\begin{aligned}
q_j&=\frac{Y_{j-1}^2+Y_{j-1}Y_j+Y_j^2-553800(2j-1)}{2556000},\;\;\mbox{for}\;\;  1\le j\le30,\\
r&=\frac1{\theta_0(\phi)}-13=-\frac{173}{71}.
\end{aligned}
\end{equation}
The exact values of all $q_j$ are given by $q_j=-m_j/2556000$, with the
integers $m_j$ listed in Table~\ref{tab:thirty-node-exponents}
in Appendix~\ref{app:thirty-node-data}.
Thus the test value is the explicit sum
\begin{equation}\label{eq:classical-profile-evaluation}
J_{30}\left(\frac{13}{30},y\right)
=\frac{13}{30}\sum_{j=1}^{30}e^{q_j}+e^{-\frac{173}{71}}.
\end{equation}
We verify the required strict lower bound by the elementary inequality
$e^s\ge1+s+s^2/2$, for $s\ge0$, which gives
\begin{equation}\label{eq:classical-quadratic-bound}
e^q\ge e^{-\frac{k}{8}}\left[1+q+\frac k8+
\frac12\left(q+\frac k8\right)^2\right],\;\;\mbox{for}\;\;
 q\ge-\frac k8.
\end{equation}
Taking $z=1/128$ in \eqref{log-inequality}, we have $\log\frac{129}{127}\ge\frac{1}{64}$, and hence
\[
e^{-\frac18}\ge
\left(\frac{127}{129}\right)^8>\frac{368}{417}.
\]
Set
\begin{equation}\label{eq:classical-base-recurrence}
B_0=10^6,\qquad
B_{k+1}=\left\lfloor\frac{368B_k}{417}\right\rfloor,
\;\;\mbox{for}\;\; 0\le k<20,
\end{equation}
where $\lfloor x\rfloor=\max\{m\in\mathbb Z:m\le x\}$. It follows that  
\[
10^{-6}B_k\le\left(\frac{368}{417}\right)^k<e^{-\frac{k}{8}},
\;\;\mbox{for}\;\; 1\le k\le20.
\]
For each $j\in\{1,\ldots,30\}$, we choose
$k=\lceil -8q_j\rceil$, where
$\lceil x\rceil=\min\{m\in\mathbb Z:m\ge x\}$. In particular,  $0\le q_j+k/8<1/8$.
Grouping the indices with the same $k$, we partition $\{1,\ldots,30\}$
into the sets $I_k$ listed in Table~\ref{tab:classical-index-groups}.
For each listed $k\ne20$, put
\[
\begin{aligned}
C_k&=\frac{13}{30}\sum_{j\in I_k}
\left[1+q_j+\frac k8+\frac12\left(q_j+\frac k8\right)^2\right],\\
C_{20}&=1+r+\frac{20}{8}+\frac12\left(r+\frac{20}{8}\right)^2
=\frac{42965}{40328}.
\end{aligned}
\]
For the tail, we have $r+20/8=9/142\in[0,1/8)$.
Using $q_j=-m_j/2556000$, with the integers $m_j$ listed in
Table~\ref{tab:thirty-node-exponents}, we obtain $C_k\ge A_k\cdot10^{-5}$,
with the integers $A_k$ in Table~\ref{tab:classical-index-groups}. The last column is obtained from
\eqref{eq:classical-base-recurrence}.
\begin{table}[H]
\centering
\[
\begin{array}{c|l|r|r}
k&I_k&A_k&B_k\\\hline
2&\{1\}&45176&778795\\
4&\{22,23\}&89997&606521\\
5&\{2,21,24\}&139644&535251\\
6&\{20,25\}&93685&472355\\
7&\{19\}&45381&416850\\
8&\{3,26\}&94630&367867\\
9&\{18\}&48386&324640\\
10&\{4,17,27\}&136930&286492\\
12&\{5,16,28\}&137147&223118\\
13&\{15\}&46082&196900\\
14&\{6,14\}&93124&173763\\
15&\{7,13,29\}&136343&153344\\
16&\{8,9,10,11,12\}&228165&135325\\
18&\{30\}&43722&105390\\
20&\mbox{tail}&106538&82077
\end{array}
\]
\caption{Index groups $I_k$ and integers $A_k$, $B_k$ used in the lower bound for $J_{30}$. The last row corresponds to the constant tail of the profile.}
\label{tab:classical-index-groups}
\end{table}
All entries in the coefficient column require only rational arithmetic.
For example,
\[
\begin{aligned}
q_1&=-\frac{133121}{639000},\qquad q_1+\frac14=\frac{26629}{639000},\\
C_2&=\frac{13}{30}\left[1+\frac{26629}{639000}+
\frac12\left(\frac{26629}{639000}\right)^2\right]
>45176\cdot10^{-5}.
\end{aligned}
\]
Since the listed sets $I_k$, $k\ne20$, partition $\{1,\ldots,30\}$ and
all $C_k$ are positive, \eqref{eq:classical-quadratic-bound} gives
\[
J_{30}\left(\frac{13}{30},y\right)
\ge\sum_k e^{-\frac{k}{8}}C_k
>10^{-11}\sum_k A_kB_k
=4.38421793217,
\]
where both sums run over the rows of Table~\ref{tab:classical-index-groups}. Consequently,
\begin{equation}\label{eq:classical-direct-certificate}
\frac{\mathrm{TM}_{4\pi,0}}{\pi}
\ge J_{30}\left(\frac{13}{30},y\right)
>4.38421793217.
\end{equation}
\end{proof}

%======================================================================
\appendix
\section{Numerical data and computations}\label{app:numerical}
%======================================================================

\subsection{Heights and exponents of the thirty-node profile}\label{app:thirty-node-data}

The choice of $h=13/30$ and of the heights listed below is based on approximate
parameter values obtained by numerically maximizing $J_{30}(h,y)$ under
the normalization $y_0=0$ and $y_{30}=1$. These parameters were then
replaced by nearby rational values; the proof of Theorem~\ref{thm:classical-bounds} uses only the
resulting explicit profile, with nodes $t_j=13j/30$ and heights $y_j=Y_j/3000$, where the
integers $Y_j$ are given in Table~\ref{tab:thirty-node-heights}.

\begin{table}[H]
\centering
\begin{tabular}{c|r@{\qquad}c|r@{\qquad}c|r}
$j$&$Y_j$&$j$&$Y_j$&$j$&$Y_j$\\\hline
1&146&11&1549&21&2709\\
2&291&12&1682&22&2783\\
3&436&13&1813&23&2844\\
4&579&14&1942&24&2892\\
5&721&15&2068&25&2928\\
6&862&16&2191&26&2954\\
7&1002&17&2309&27&2973\\
8&1141&18&2422&28&2986\\
9&1278&19&2527&29&2994\\
10&1414&20&2623&30&3000
\end{tabular}
\caption{Integers defining the heights $y_j=Y_j/3000$ of the thirty-node profile.}
\label{tab:thirty-node-heights}
\end{table}
\noindent For these heights, \eqref{eq:classical-exponent-values} gives
\[
\begin{aligned}
q_j=-\frac{m_j}{2556000},\;\;\mbox{where}\;\;
m_j=553800(2j-1)-Y_{j-1}^2-Y_{j-1}Y_j-Y_j^2,
\;\;\mbox{for}\;\; 1\le j\le 30.
\end{aligned}
\]
Table~\ref{tab:thirty-node-exponents} lists the integers $m_j$. For instance, $$
m_2=3\cdot553800-146^2-146\cdot291-291^2=1512917.
$$
\begin{table}[H]
\centering
\begin{tabular}{c|r@{\qquad\qquad}c|r}
$j$&$m_j$&$j$&$m_j$\\\hline
1&532484&16&3559707\\
2&1512917&17&3084419\\
3&2367347&18&2593037\\
4&3098819&19&2118393\\
5&3711659&20&1704021\\
6&4207413&21&1381283\\
7&4588628&22&1190483\\
8&4857833&23&1172723\\
9&5021237&24&1351752\\
10&5082428&25&1731576\\
11&5040717&26&2295188\\
12&4903457&27&3004313\\
13&4679441&28&3826697\\
14&4373421&29&4746284\\
15&3996156&30&5728164
\end{tabular}
\caption{Integers defining the exact exponents $q_j=-m_j/2556000$ of the thirty-node profile.}
\label{tab:thirty-node-exponents}
\end{table}

\subsection{Numerical experiments with finer profiles}\label{app:numerical-classical}

Table~\ref{tab:numerical-classical} lists the values for
$\mathrm{TM}_{4\pi}/\pi$ from the numerical experiments mentioned in
Remark~\ref{rem:numerical-classical}. The aim is to illustrate how closely the $\mathrm{TM}_{4\pi}$ supremum can be approximated using an $n$-node polygonal profile.
They were obtained by numerically optimizing the spacing $h$ and the
heights $y_j$ in $J_n(h,y)$ (see \eqref{eq:classical-jensen-functional}) for
polygonal profiles with $n$ uniformly spaced nodes $t_j=jh$, normalized by
$y_0=0$ and $y_n=1$. The thirty-node row uses a more precise rational
choice of parameters than the profile in Appendix~\ref{app:thirty-node-data}.
For each $n$, the optimized heights were rounded to multiples of $10^{-12}$,
and the last column is a strict lower bound for $J_n(h,y)$ at the resulting
rational profile, obtained by truncating its value to nine decimal places.
This lower bound is certified in exact rational arithmetic by means of the
Taylor polynomials of degrees $59$ and $60$ of the exponential function, which
bound $e^x$ from below and from above, respectively, for $x\le0$ (all exponents
in $J_n(h,y)$ are nonpositive by Lemma~\ref{lem:jensen-rational}).
Thus these values are computer-assisted: each one is a strict lower bound for
$J_n(h,y)$, and hence for $\mathrm{TM}_{4\pi}/\pi$ by
\eqref{eq:classical-jensen-bound}, provided that the supplementary code is
correct. They are not used in any proof in this paper.
The rows with $n\ge51200$ lie above the lower bound $4.40273202$ obtained in
the proof of \cite[Proposition~7.7]{LiSun26}.

\begin{table}[H]
\centering
\renewcommand{\arraystretch}{1.5}
\begin{tabular}{rccc}
Nodes $n$ & Terminal point $T=nh$ & Spacing $h$ & Numerical lower bound\\\hline
$30$ & $\frac{1667}{128}$ & $\frac{1667}{3840}$ & $4.384870657$\\
$1600$ & $\frac{4391}{256}$ & $\frac{4391}{409600}$ & $4.402721141$\\
$3200$ & $\frac{1141}{64}$ & $\frac{1141}{204800}$ & $4.402729108$\\
$6400$ & $\frac{4737}{256}$ & $\frac{4737}{1638400}$ & $4.402731258$\\
$12800$ & $\frac{2455}{128}$ & $\frac{491}{327680}$ & $4.402731836$\\
$25600$ & $\frac{5083}{256}$ & $\frac{5083}{6553600}$ & $4.402731991$\\
$51200$ & $\frac{657}{32}$ & $\frac{657}{1638400}$ & $4.402732033$\\
$102400$ & $\frac{2715}{128}$ & $\frac{543}{2621440}$ & $4.402732044$\\
\end{tabular}
\caption[Numerical lower bounds for $\mathrm{TM}_{4\pi}/\pi$]{Numerical lower bounds for $\mathrm{TM}_{4\pi}/\pi$ obtained
experimentally by optimizing the spacing and heights in $J_n(h,y)$. Each row
is confirmed by the supplementary files \texttt{profile\_}$n$\texttt{.json}
(rational profile), \texttt{exponents\_}$n$\texttt{.json} (exact exponents),
\texttt{value\_}$n$\texttt{.json} (high-precision value) and
\texttt{certificate\_}$n$\texttt{.json} (exact rational lower bound); the
supplementary script \texttt{evaluate.rb} reproduces all rows. For comparison,
$4.40273202\le\mathrm{TM}_{4\pi}/\pi\le4.40273208$ (see the proof of
\cite[Proposition~7.7]{LiSun26}).}
\label{tab:numerical-classical}
\end{table}

%======================================================================
\section*{Statement on the use of AI}

During the preparation of this manuscript, the author used the AI tools ChatGPT and Codex (OpenAI) and Claude Code (Anthropic). The use of these tools was restricted to: (i) the organization and wording of the text; (ii) the verification of mathematical arguments and computations; and (iii) the writing of the code for the numerical experiments described in Remark~\ref{rem:numerical-classical}, whose results are listed in Appendix~\ref{app:numerical-classical}. These numerical experiments are not used in the proof of any result of the paper. The author takes full responsibility for the mathematical content, computations, references, and final text.

\section*{Funding}
J.\,F.~de Oliveira acknowledges partial support from the National Council for Scientific and Technological Development (CNPq, Brazil) through grant 303443/2025-1.
%======================================================================

\end{document}